\newtheorem{theorem}{Theorem}
\newtheorem*{theorem*}{Theorem}
\newtheorem{corollary}{Corollary}[theorem]
\newtheorem{lemma}{Lemma}
\newtheorem{proposition}{Proposition}
\newtheorem*{definition*}{Definition}
\newtheorem*{example*}{Example}
\newtheorem{remark}{Remark}
\newtheorem*{conditional*}{Conditional}
\newtheorem*{remark*}{Remark}
\definecolor{Gray}{gray}{0.85}
\definecolor{LightCyan}{rgb}{0.88,1,1}
\newcolumntype{a}{>{\columncolor{Gray}}c}
\newcolumntype{b}{>{\columncolor{Gray}}r}
\newcolumntype{d}{>{\columncolor{white}}c}
\providecommand{\keywords}[1]
{
  \small	
  \textbf{\textit{Keywords---}} #1
}
\title{%Fitness and Infinite Divisibility for \\ 
%Stable Distributions and 
%and  Riemann-Liouville 
%Stable Densities, Fractional Integrals and the %Completely Monotonicity  of the 
Distributions based on Stable Mixtures and  Gamma-Stable Convolutions}
\author{Nomvelo Karabo Sibisi \\{\small {\tt sbsnom005@myuct.ac.za}}}
\date{\today}
\begin{document}
\maketitle
\thispagestyle{empty}

%\tableofcontents

\begin{abstract}
\noindent
%function is the Laplace transform of a distribution   and thus  completely monotone.
This  paper explores mixture distributions induced by a product of the positive  stable random variable
and a  power of another positive random variable. 
The paper also considers the convolution of the stable density with a gamma density. 
These two constructs, mixing and convolution, suffice to generate a rich family of distributions.
An example  is the positive Linnik distribution, which is known to arise from a  product involving stable and gamma random variables.
We show that  gamma-Linnik convolution gives the Mittag-Leffler distribution and %readily generates 
the Mittag-Leffler Markov chain associated with the growth of random trees.
Building on  that, we construct a new family of distributions with explicit densities.
A particular choice of parameters for this family yields the Lamperti-type laws
 associated with occupation times for  Markov processes.
\end{abstract}
\keywords{convolution, mixing; 
gamma, beta, stable,  Mittag-Leffler distributions; \\ Mittag-Leffler Markov chain;
positive Linnik law, Lamperti-type laws
% Mittag-Leffler \\ functions;  %generalised Mittag-Leffler distribution; 
%Bernstein  functions; complete monotonicity. %; infinite divisibility.
%completely monotone functions. %; infinite divisibility.
}
%generalised Mittag-Leffler distributions.} 
 %  fractional gamma distributions.}
 %L\'{e}vy-Khintchine representation; } 
 
% \tableofcontents
%\newpage

\section{Introduction}
\label{sec:introduction}

All results derived in this paper follow from  two theorems that involve  mixing and convolution of distributions,
with the positive stable distribution as a common factor.
%This paper is underpinned by  two primary  constructs:
\paragraph{First  Theorem:}
Let $S_{\alpha;z}\sim F_\alpha(\cdot \vert z)$  $(0 <\alpha<1$, $z>0)$ 
denote a  positive stable random variable 
where $F_\alpha(t \vert z)$ $(t>0)$, with  density $f_\alpha(t \vert z)$,
is characterised by the Laplace transform
\begin{align}
\mathbb{E}\left[e^{-xS_{\alpha;z}}\right] &\equiv \int_0^\infty e^{-xt} \,  f_\alpha(t\vert z) \, dt = e^{-zx^\alpha} \qquad (x\ge0)
\label{eq:intro_stableLT}
\end{align}
Let $U\sim F$ be another   positive random variable with probability distribution $F$. %on a positive variable.

{\bf Theorem~\ref{thm:gen_rvproduct}} states that $T_{\alpha,z}=S_{\alpha;z} \, U^{1/\alpha}$  has a distribution 
$H_\alpha(\cdot \vert z)$ with explicit density
\begin{align}
h_\alpha(t \vert z)
&=  \int_0^\infty  f_\alpha(t \vert z u) \, dF(u)  \quad (t>0)% \equiv  \int_0^\infty  f_\alpha(t \vert z u) f(u) du 
\label{eq:intro_rvproduct_density}
\end{align}
and  Laplace transform %$\mathbb{E}\left[e^{-xS_{\alpha;z} \, U^{1/\alpha}}\right] 
$\mathbb{E}\left[e^{-xT_{\alpha,z}}\right] =\mathbb{E}\left[e^{-zx^\alpha U}\right]$ $(x\ge0)$.
%${\widetilde f}(zx^\alpha)$, where ${\widetilde f}(x)$ is  the Laplace transform of $f(t)$. 
%The density 
%(\ref{eq:intro_rvproduct_density}) 
%Theorem~\ref{thm:gen_rvproduct} 
The mixture density~(\ref{eq:intro_rvproduct_density}) may  be regarded  as an example of 
subordination  for distributions on $(0,\infty)$ (Feller~\cite[XIII.7({$e$})]{Feller2}).
%While this  is not particularly subtle in itself, it has far-reaching implication when coupled with the following:  

{\bf Corollary~\ref{cor:hPollardinfsum}} uses a  representation due to  Pollard~\cite{Pollard}  of the stable density  $f_\alpha(t \vert z u)$ 
 to show that,  if $F$ has   moments $\{\mu_k\}$ of all order,
then $h_\alpha(t \vert z)$  admits the infinite series form
%\begin{align}
%  \mu_k &= \int_0^\infty  u^k \,dF(u)  \equiv (-1)^k \psi^{(k)}(0)   < \infty \qquad k\ge0
%  \label{eq:momentsF}
%\end{align}
%then $h_\sigma(t \vert z)$ admits the series form
\begin{align}
h_\alpha(t\vert z)
% &= -\frac{1}{\pi} \sum_{k=0}^\infty \frac{(-\lambda)^k}{k!} \sin(\pi\alpha k) \,  M_k \int_0^\infty e^{-tv} \, v^{\alpha k} \, dv
%\label{eq:hPollardinfsum1} \\
 &= -\frac{1}{\pi} \sum_{k=1}^\infty \frac{(-z)^k}{k!} \sin(\pi\alpha k) \,\frac{\Gamma(\alpha k+1)}{t^{\alpha k+1}} \;  \mu_k 
\label{eq:intro_hPollardinfsum} 
\end{align}
%whose positivity is by no means obvious  when taken in isolation from the mixture form.

The study of  laws induced by $S_{\alpha;z} \, U^{1/\alpha}$  is not new. %even though the  densities are often not  stated explicitly.
Notably, Pakes~\cite[(4.4)]{Pakes95} considered   $U_{\nu,1}\sim G(\nu,1)$ (gamma distribution) 
and termed the  law of $S_{\alpha;z} \, U_{\nu,1}^{1/\alpha}$  the generalised positive  Linnik law.
%after the symmetric case for  $\nu=1$ due to Linnik (see Devroye~\cite{Devroye_Linnik} for a concise discussion of the latter).
Bondesson~\cite[3.2.4(p38)]{Bondesson} discussed  $S_{\alpha;1} U_{\nu,1}^{1/\alpha}$ in a context that  
we will  comment on  later.
We will explore various  choices of $F$,  generalised positive Linnik  distribution included.
%including a  simple jump   at ,  with Dirac delta density $\delta_c(u)\equiv\delta(u-c)$,
%which induces $h_\alpha(t \vert z,c) = f_\alpha(t \vert zc)$ by~(\ref{eq:intro_rvproduct_density}).

%It suffices for  our purposes  for $F$ to be a probability distribution with a density $f$, %with respect to  Lebesgue measure, 
%{\it i.e.}\ $dF(t)\equiv f(t)dt$.
%A  jump at $c$ in $F(t)$ is represented by   the Heaviside  unit-step function  so means that  $f(t)$  includes the Dirac delta function $\delta_c(t)\equiv\delta(t-c)$.

\paragraph{Second  Theorem:}
 Let the Dirac   delta  function $\delta_c(t) \equiv\delta(t-c)$ be the   density of a jump at $t=c\ge0$.
Also let  $\rho_\nu(t) =  t^{\nu-1}/\Gamma(\nu)$ $(t>0, \nu>0)$. % be the power density.  
%({\it i.e.} the   density of the unnormalised gamma distribution with zero scale parameter $G(\nu,0)$).
 %with Laplace transform $x^{-\nu}$. 
%We shall also be interested in convolutions $\rho_\nu \star h_\alpha(\cdot\vert z)$ which are, in turn,  induced  by  
%$\rho_\nu \star f_\alpha(\cdot\vert z)$ through~(\ref{eq:intro_rvproduct_density}).

{\bf Theorem~\ref{thm:rho_stable_convolution}} states that  the  convolution $\rho_\nu \star f_\alpha(\cdot\vert z)$ may be written as 
% $\rho_\nu \star f_\alpha(\cdot\vert z)$ %between  $\rho_\nu$ and $f_\alpha(\cdot\vert z)$  
\begin{align}
 \{\rho_\nu\star f_\alpha(\cdot\vert z) \}(t) 
 &\equiv \int_0^t \rho_\nu(t-u) f_\alpha(u\vert z) du \\
 &=  z^{\nu/\alpha}  \int_0^\infty  f_\alpha(t \vert z u)  \{\rho_{\nu/\alpha} \star \delta_1\}(u) \, du
\label{eq:intro_power_stable_conv} \\
&=  \frac{z^{\nu/\alpha}}{\Gamma(\nu/\alpha)}
  \int_0^1 f_\alpha(t\vert z/u) \, u^{-\nu/\alpha}  (1-u)^{\nu/\alpha-1}  \, \frac{du}{u} 
  \label{eq:intro_power_stable_conv_betadensity} 
\end{align}
The first integral is  the  standard Laplace convolution form.
The second integral
% casts the convolution  in a form that may be regarded  as a standalone identity or as a special case  
%of~(\ref{eq:intro_rvproduct_density}) with $f(u)\equiv \{\rho_{\nu/\alpha} \star \delta_1\}(u)$.
shows that the   convolution with the stable density may conveniently be delegated to a convolution with the 
Dirac delta function $\delta_1$ instead.
The third integral arises from the change of variable $u\to 1/u$.
%This will prove to be extremely convenient.

If we write $f_\alpha(t)\equiv f_\alpha(t\vert 1)$,  it is easy to  see from~(\ref{eq:intro_stableLT})  that 
$f_\alpha(t\vert u) \equiv f_\alpha(t u^{-1/\alpha}) u^{-1/\alpha}$. 
Hence, for  $z=1$, $\{\rho_\nu\star f_\alpha(\cdot\vert 1) \}(t)$  in the form~(\ref{eq:intro_power_stable_conv_betadensity}) 
is   equivalent  to  Ho~{\it  et al.}~\cite[Proposition~2.2($iii$)]{HoJamesLau}, derived therein by a different approach 
%based on products and powers of various random variables
with $\rho_\nu\star f_\alpha$ expressed as  the  Riemann-Liouville fractional integral  $I_+^\nu\, f_\alpha$.
It is key to note that, in our case,  we  regard $\{\rho_\nu\star f_\alpha(\cdot\vert z) \}(t)$ as a density  in two variables $(z,t)$.
Depending on context, this may be a density in $t$ given $z$ or {\it vice versa}.
%and we shall often  look upon  it as a function of $z$ given $t$.
 
\subsection{Novelty and Motivation}
\label{sec:novelty}

The novelty of this paper  lies in Theorem~\ref{thm:gen_rvproduct}  and Theorem~\ref{thm:rho_stable_convolution}
as well as in   bringing them together 
to generate a broad family of distributional laws based on $S_{\alpha;z} \, U^{1/\alpha}$  for general positive
$U\sim F$, %(including the positive  Linnik example), 
with explicit densities %$h_\alpha(\cdot\vert z)$ or $\rho_\nu \star h_\alpha(\cdot\vert z)$ 
in every case,
along with associated infinite series representations.
The   highlight is a new family of laws  that subsumes several well-studied instances.
%These laws will subsume Mittag-Leffler laws and  Lamperti-type laws (which are reviewed below) as special cases.
At a minimum, the  approach of providing explicit densities complements the common approach of specifying equalities in law 
of various products of  independent random variables without  necessarily offering explicit forms of the densities involved.

The inspiration to study the family of laws described by Theorems~\ref{thm:gen_rvproduct}  and~\ref{thm:rho_stable_convolution}
stems from the   Mittag-Leffler 
distributions, which arise in diverse areas of application. %as briefly reviewd in Section~\ref
Prominently, they arise  in the study of  occupation times for  Markov processes.
The   Mittag-Leffler distribution, often   denoted by ${\rm ML}(\alpha)$,   arises as 
the limiting distribution  for the occupation time of a single-state (Darling and Kac~{\cite{DarlingKac}),
while  multi-state cases lead to generalised arcsine laws
(Bertoin~{\it et al.}~\cite{Bertoin}, Feller~\cite[XIV.3]{Feller2}, James~\cite{James_Bernoulli, James_Lamperti}, 
James~{\it et al.}~\cite{JamesRoynetteYor},
Lamperti~\cite{Lamperti},  Pitman~\cite{Pitman_CSP, Pitman2018}).

In a study of  random partitions of the integers,
Pitman~\cite[Theorem~3.8]{Pitman_CSP}  derived the two-parameter Mittag-Leffler   distribution ${\rm ML(\alpha,\theta})$   $(\theta>-\alpha)$ % is associated with  ``polynomial tilting" of the $\alpha$-stable density.
as the limiting  distribution of the number of partition blocks generated by 
 the Chinese restaurant process construction (Pitman~\cite[3.1]{Pitman_CSP})  of   the
Poisson-Dirichlet process ${\rm PD(\alpha,\theta})$ of Pitman and Yor~\cite{PitmanYor}.
${\rm ML(\alpha,\theta})$ is also said to be the distribution of the $\alpha$-diversity of ${\rm PD(\alpha,\theta})$;
a term inspired by application in ecology, where it refers to species diversity in a local habitat.
A derivation of ${\rm ML(\alpha,\theta})$   due to Janson~\cite{Janson} takes  the form of  a limiting distribution of  a  P{\' o}lya urn scheme
(also see Flajolet~{\it  et al.}~\cite{Flajolet}).

In another  context, ${\rm ML(\alpha,\theta})$ distributions  and the associated Mittag-Leffler Markov chain
play a fundamental role in  modelling  the evolution of  random trees and graphs. 
%often used as models of real-world network behaviour. 
In the study of stable trees (Goldschmidt and Haas~\cite{GoldschmidtHaas}, Rembart and Winkel~\cite{RembartWinkel2018, RembartWinkel2023})
and stable graphs (Goldschmidt~{\it  et al.}~\cite{ Goldschmidt22}), 
the distributions that feature  are Beta, ${\rm ML(\alpha,\theta})$, 
Dirichlet and Poisson-Dirichlet  ${\rm PD(\alpha,\theta})$.
%The beta-Mittag-Leffler distribution   also arises in Goldschmidt~{\it  et al.}~\cite[Theorem~5.7]{ Goldschmidt22}
%(although it is not explicitly thus named) as a limit distribution of a triangular  urn scheme (as in Banderier~{\it  et al.}~\cite[6.7]{Banderier}).
James~\cite{James2015}, S\'{e}nizergues~\cite{Senizergues} discussed the appearance of  
${\rm ML(\alpha,\theta})$ as limit distributions  in the   growth of random graphs
 inspired by  the preferential  attachment model due to Barab\'{a}si and  Albert~\cite{BarabasiAlbert}.
% ${\rm ML(\alpha,\theta})$ also arises in a random walk model of  network formation by Bloem-Reddy and Orbanz~\cite{BloemReddyOrbanz}.
%The Mittag-Leffler Markov chain of Goldschmidt and Haas~\cite{GoldschmidtHaas} plays a fundamental role in these studies.
 
 Our approach is to `stand back' from the technical detail of  application contexts and explore in the abstract distributional laws that obey
 Theorems~\ref{thm:gen_rvproduct} and~\ref{thm:rho_stable_convolution}.
 The Mittag-Leffler laws turn out to be a part of that class, as do  the  ``Lamperti-type laws'' that were so-named by James~\cite{James_Lamperti}.
  In all instances, the emphasis is on providing explicit densities. % whether or not they are deemed to be of simple in  closed form.

\subsection{Structure of Paper}
\label{sec:structure}

Section~\ref{sec:prelim} outlines known distributions that are building blocks of later discussion.
Section~\ref{sec:theorems} gives formal detail on  Theorem~\ref{thm:gen_rvproduct}  and Theorem~\ref{thm:rho_stable_convolution}.
Section~\ref{sec:Linnik} discusses the  positive  Linnik distribution
% in a form crucially more general than that  due to Pakes~\cite{Pakes95}, 
as an instance of  Theorem~\ref{thm:gen_rvproduct}.
Section~\ref{sec:ML_laws} shows how the  Mittag-Leffler distributions follow   from  Gamma-Linnik  convolution. 
%Section~\ref{sec:Linnik} and  Gamma-Linnik  convolution. 
 Section~\ref{sec:ML_laws} also uses Theorem~\ref{thm:rho_stable_convolution} to derive  the recursion between the densities of
 ${\rm ML(\alpha,\theta})$ and ${\rm ML(\alpha,\theta+1})$ that is  the basis of the well-known Mittag-Leffler Markov chain.
In turn, this leads to a compact derivation of the transition probability of the chain, demonstrating its time-homogenous nature. 
Section~\ref{sec:new_family} introduces a new family of distributions  based on 
$U\sim {\rm ML(\alpha,\theta})$ in Theorem~\ref{thm:gen_rvproduct}. 
We  provide explicit densities in all instances, as well as  corresponding infinite series representations.
The resulting laws include the Lamperti-type laws described by James~\cite{James_Lamperti} as a particular case.
Section~\ref{sec:generalisation} discusses a natural  generalisation of the known Mittag-Leffler distributions.
Section~\ref{sec:discussion} gives concluding remarks.

\section{Preliminaries}
\label{sec:prelim}

\subsection{Gamma Distribution}
\label{sec:gamma}

 The gamma distribution $G(\nu,\lambda)$  with shape   and scale parameters $\nu, \lambda>0$ and density  $\rho_{\gamma,\lambda}(t)$  is
\begin{align}
dG(t \vert \nu,\lambda) 
&\equiv  \rho_{\nu,\lambda}(t)  \, dt  
= \frac{\lambda^\nu}{\Gamma(\nu)}\,t^{\nu-1} \,e^{-\lambda t} dt  \qquad (t>0)
\label{eq:gamma} 
\end{align}
For $\lambda=0$, we  write   the unnormalised power density as $\rho_\nu(t) =  t^{\nu-1}/\Gamma(\nu)$.
The   Laplace-Stieltjes  transform of $G(\nu,\lambda)$ (or the  Laplace transform of its density $\rho_{\nu,\lambda}$) is
% and the Laplace transform  of $\rho_\nu(t)$  are respectively
\begin{alignat}{3}
\int_0^\infty e^{-xt}\, dG(t \vert \nu,\lambda) 
\equiv \int_0^\infty e^{-xt}\,\rho_{\nu,\lambda}(t) \, dt &= \frac{\lambda^\nu}{(\lambda+x)^\nu} &&\quad (x\ge0)
\label{eq:gammaLT}  \\
\int_0^\infty e^{-xt}\, \rho_\nu(t)\, dt &=  x^{-\nu} &&\quad (x>0)
\label{eq:powerLT}
\end{alignat}  
We also define $\rho_{\nu=0}(t)=\delta(t)$ with Laplace transform 1.

\subsection{Beta Distribution}
\label{sec:beta}

The beta distribution ${\rm Beta}(\alpha,\beta)$  with density ${\rm beta}(t \vert \alpha,\beta)$ $t\in(0,1)$ conditioned on  $\alpha,\beta>0$  is
% with  respective  density and moments 
 \begin{align}
%  \beta_{\alpha,\beta}(t) &= \frac{\Gamma(\alpha+\beta)}{\Gamma(\alpha)\Gamma(\beta)} \, t^{\alpha-1}(1-t)^{\beta-1} %\mathbbm{1}_{[0,1]}
d\,{\rm Beta}(t \vert \alpha,\beta) \equiv  {\rm beta}(t\vert \alpha,\beta) dt 
&= \frac{\Gamma(\alpha+\beta)}{\Gamma(\alpha)\Gamma(\beta)} \, t^{\alpha-1}(1-t)^{\beta-1} dt %\mathbbm{1}_{[0,1]}
\label{eq:beta} 
%\int_0^1 t^k \, {\rm beta}(t\vert \alpha,\beta)\, dt  &= \frac{\Gamma(\alpha+\beta)\Gamma(\alpha+k)}{\Gamma(\alpha)\Gamma(\alpha+\beta+k)}
%\quad ({\rm moments})  \label{eq:betamoments}
%\intertext{${\rm Beta}(\alpha,1-\alpha)$ is also known as the generalised arcsine distribution, the standard  case  being  $\alpha=1/2$. 
%The density of ${\rm Beta}(1-\alpha,\alpha)$ is}
%{\rm beta}(t\vert 1-\alpha,\alpha) &= \frac{\sin \pi \alpha}{\pi} \, t^{-\alpha}(1-t)^{\alpha-1} %\mathbbm{1}_{[0,1]}
%\label{eq:arcsine} 
\end{align} 
%since $\sin \pi \alpha/\pi = 1/\Gamma(1-\alpha)\Gamma(\alpha)$ (Euler's reflection formula).

%\begin{comment}
In anticipation of later discussion,  let  $B_{\alpha,\beta}\sim {\rm Beta}(\alpha,\beta)$ and $U\sim F$ be independent  random variables 
where $F$ has a density $f(u)$, $u>0$.
%The joint density is $\Pr(u,v) = \Pr(t\vert u,v) f(u) {\rm beta}(v\vert a,b)$. 
%The joint density is $\Pr(u,v) =  f(u) {\rm beta}(v\vert a,b)$. 
%Then the  joint density of $U=u$ and $V_{a,b}=v$ is  $\Pr(u,v) = \Pr( u) \Pr(v) =  f(u) {\rm beta}(v\vert a,b)$.
The joint density of $U=u, B_{\alpha,\beta}=v$ and  the product  $U\times B_{\alpha,\beta}=t$ is
\begin{align*}
\Pr(t, u,v) = \Pr(t\vert u,v) \Pr( u) \Pr(v) = \delta(t-uv) f(u) {\rm beta}(v\vert \alpha,\beta)
\end{align*}
where $\delta(t-c)$ is the Dirac delta density centred at $c$.
Hence $\Pr(t,u)=\Pr(t\vert u)\Pr(u)$ is
 %Hence $\Pr(t,u) &
\begin{align}
\Pr(t,u) &= \int_0^1 \delta(t-uv) f(u) {\rm beta}(v\vert \alpha,\beta) dv =  f(u) \times \frac{1}{u} \, {\rm beta}(t/u \vert \alpha,\beta) \quad (u>t) 
 \label{eq:jointdensity} \\
\implies \; \Pr(t) &= \int_t^\infty f(u) {\rm beta}(t/u \vert \alpha,\beta) \, \frac{du}{u}
%\Pr(t,v) &= \int_0^\infty \delta(t-uv) f(u) {\rm beta}(v\vert a,b)du = \frac{1}{v} f(t/v) {\rm beta}(v \vert a,b) \; v\in(0,1) \\
%u\to t/u \; &=
  = \int_0^1 f(t/u) \, {\rm beta}(u\vert \alpha,\beta) \, \frac{du}{u}  %\nonumber \\
%&\equiv \frac{\Gamma(a+b)}{\Gamma(a)\Gamma(b)} \int_0^1 f(t/x) \, u^{a-1}(1-u)^{b-1}\, \frac{du}{u} 
 \label{eq:productdensity}
\end{align}
%\end{comment}
The  notation $T_{\alpha,\beta}  \overset{d}{=} U B_{\alpha,\beta}$,
where $T_{\alpha,\beta}$ has  the distribution with density~{(\ref{eq:productdensity}),
 states equality in distribution of the  random variables  without giving explicit forms of  associated densities.

\subsection{Stable Distribution}
\label{sec:stable}

The one-sided  stable distribution $F_\alpha(z)$ for  ($0<\alpha<1$) and scale parameter $z>0$
with density $f_\alpha(u\vert  z)$ ($u>0$) is indirectly defined by   its Laplace-Stieltjes transform 
$($equivalently, the ordinary Laplace transform of  $f_\alpha(u\vert  z)$) over $u$
\begin{align}
%\widetilde f_\alpha(s\vert t) \equiv 
e^{-z x^\alpha} &= \int_0^\infty e^{-xu}\,dF_\alpha(u\vert z) \equiv  \int_0^\infty e^{-x u}  f_\alpha(u\vert z)\, du \qquad (x\ge0)
%= \int_0^\infty e^{-x t} f(t)dt
\label{eq:stable} 
\end{align}
$f_\alpha(u\vert z)$ may  be written as $ f_\alpha(u z^{-1/\alpha})\, z^{-1/\alpha}$, 
where $f_\alpha(u)\equiv f_\alpha(u \vert 1)$.
Then  $F(u\vert z) \equiv F(z^{-1/\alpha} u \vert 1)$.
It is also convenient  to define
%We also  define %the upper bound  
 $f_{\alpha=1}(u\vert z) = \delta(u-z)$ with Laplace transform $e^{-z x}$.

The stable density is only known in closed form for selected values of $\alpha$, the simplest  being for $\alpha=1/2$.
For the  general  rational  case $\alpha=l/k$ for positive integers $l,k$ with $k>l$, Penson and G\'{o}rska~\cite{PensonGorska} expressed
 $f_\alpha$ in terms of the Meijer $G$ function
 (also see James~\cite[8]{James_Lamperti}).

 Pollard~\cite{Pollard} (also see Feller~\cite[XVII.6]{Feller2}) proved that $f_\alpha(u \vert z)$ ($0<\alpha<1$)
has the   infinite series representation
 (with  modification here to include conditioning on the scale factor $z>0$)
\begin{align}
  f_\alpha(u\vert z)  &= \phantom{-} \frac{1}{\pi}\,{\rm Im}  \int_0^\infty e^{-uv} \, e^{-z (e^{-i\pi}v)^\alpha}  \, dv  
%    &=  \frac{1}{\pi}\,{\rm Im}  \int_0^\infty e^{-xu} \, \widetilde f_\alpha\left(x\vert t e^{-i\pi\alpha}\right) du 
\label{eq:Pollard} \\
&= \phantom{-} \frac{1}{\pi}\,{\rm Im}
   \sum_{k=0}^\infty \frac{(-z)^k}{k!} e^{-i\pi\alpha k}  \int_0^\infty e^{-uv} \, v^{\alpha k} \, dv
\label{eq:Pollardinfsum1} \\
&= -\frac{1}{\pi}
   \sum_{k=1}^\infty \frac{(-z)^k}{k!} \sin(\pi\alpha k) \, \frac{\Gamma(\alpha k+1)}{u^{\alpha k+1}}
\label{eq:Pollardinfsum2} 
\end{align}
 
 \subsection{Mittag-Leffler Distributions}
\label{sec:ML}
 
 Pollard~\cite{PollardML}  proved that, for $0\le\alpha\le1$, the  Mittag-Leffler function $E_\alpha(-x)$ 
 (as defined in Appendix~\ref{sec:MLfunction})
may be written as
\begin{align}
E_\alpha(-x) &= \int_0^\infty e^{-x t} \, dP_\alpha(t) \equiv \int_0^\infty e^{-x t} p_\alpha(t) \, dt \quad (x\ge0)
\label{eq:ML1parLT}
\end{align}
where the probability  distribution $P_\alpha(t)$, with  density $p_\alpha(t) = f_\alpha(t^{-1/\alpha}) \, t^{-1/\alpha-1}/\alpha$,
is known as the  Mittag-Leffler   distribution.
Pollard used  complex analytic inversion of~(\ref{eq:ML1parLT})  to derive $p_\alpha(t)$.
In an alternative derivation, Feller~\cite[XIII.8]{Feller2} considered the two-dimensional Laplace  transform of the 
stable distribution   $1-F_\alpha(t u^{-1/\alpha})$. %  inspires much of the approach taken here.
An alternative probabilistic derivation considers the distribution of   $S^{-\alpha}_{\alpha}$ where $S_{\alpha}$
is the $\alpha$-stable random variable (James~\cite{James_Lamperti}, Pitman~\cite{Pitman_CSP}).

%Pollard~\cite{PollardML}   used the explicit series~(\ref{eq:Pollardinfsum2})  of the stable density $f_\alpha(1\vert t)$ to derive 
 %a corresponding  series for the Mittag-Leffler density $p_\alpha(t)=f_\alpha(1\vert t)/\alpha t$
% \begin{align}
% p_{\alpha}(t) &=  \frac{1}{\pi \alpha}\sum_{k=1}^\infty \frac{(-t)^{k-1}}{k!} \sin(\pi\alpha k) \, \Gamma(\alpha k+1) 
% \label{eq:ML2parPollardinfsum} 
%\end{align}
%The  series for  $p_{\alpha,\theta}(t) \propto t^{\theta/\alpha}\, p_\alpha(t)$ follows directly.

%In the context of studying  random partitions of the integers,
The 2-parameter Mittag-Leffler  distribution $P_{\alpha,\theta}$ for  $(\theta>-\alpha)$ due to
Pitman~\cite[Theorem~3.8]{Pitman_CSP} has the  polynomially-tilted density
\begin{alignat}{5}
p_{\alpha,\theta}(t) 
&= \frac{\Gamma(1+\theta)}{\Gamma(1+\theta/\alpha)} \, t^{\theta/\alpha}\, p_\alpha(t)
&&=  \frac{\Gamma(1+\theta)}{ \Gamma(1+\theta/\alpha)} \,\frac{1}{\alpha} f_\alpha(t^{-1/\alpha}) \, t^{(\theta-1)/\alpha-1}  \quad && (\theta>-\alpha) 
 \label{eq:ML2par}   
\end{alignat}
$P_{\alpha,\theta}$  is also referred to as the generalised Mittag-Leffler distribution and alternatively 
denoted by ${\rm ML(\alpha,\theta)}$ 
(Goldschmidt and Haas~\cite{GoldschmidtHaas}, Ho~{\it  et al.}~\cite{HoJamesLau}). %, James~\cite{James_Lamperti}).
The Mittag-Leffler distribution is the case $P_{\alpha}\equiv{\rm ML}(\alpha,0)$.
It is known (James~\cite{James_Lamperti}) that,  for $x\ge0$, the Laplace transform of $p_{\alpha,\theta}(t)$ is  
\begin{alignat}{3}
\int_0^\infty e^{-x t} p_{\alpha,\theta}(t) \, dt 
&= \Gamma(1+\theta)\, E^{1+\theta/\alpha}_{\alpha,1+\theta}(-x)   \quad && (\theta>-\alpha) 
\label{eq:ML2parLT} 
%&= \Gamma(\theta)\, E^{\theta/\alpha}_{\alpha,\theta}(-x)   && (\theta>0) 
%\label{eq:ML2parLT1}
\end{alignat}
$P_{\alpha,\theta}\equiv {\rm ML(\alpha,\theta)}$  has finite moments of all order, the $k^{\rm th}$ moment  ($k\ge0$) being
\begin{alignat}{3}
\mu_{\alpha,\theta;k} \equiv \int_0^\infty t^k \, dP_{\alpha,\theta}(t)
&= \frac{\Gamma(1+\theta)\Gamma(1+k+\theta/\alpha)}{\Gamma(1+\theta/\alpha)\Gamma(1+k\alpha+\theta)}  \quad && (\theta>-\alpha) 
 \label{eq:ML2parMoment} 
 \end{alignat}
 The existence of all moments  is key to  the new family of laws introduced in Section~\ref{sec:new_family}.

\subsection{Gamma-Stable Convolution}
\label{sec:convolution}

The convolution   $\{\rho_\nu\star f_\alpha(\cdot\vert z) \}(u)$ 
of the power density $\rho_\nu(u)$ and the stable density $f_\alpha(u\vert z)$  is 
\begin{align}
\{\rho_\nu\star f_\alpha(\cdot\vert z) \}(u) =  \int_0^u \rho_\nu(u-v) f_\alpha(v\vert z) dv
=  \frac{1}{\Gamma(\nu)} \int_0^u (u-v)^{\nu-1} f_\alpha(v\vert z) dv
 \label{eq:rhoconvstable} 
\end{align}
where  $0<\alpha<1, \nu>0, z>0$.
The Laplace transform of $\{\rho_\nu\star f_\alpha(\cdot\vert z) \}(u)$  over $u$ is  $x^{-\nu} e^{-z x^\alpha}$.
%In the language of  fractional calculus,~(\ref{eq:rhoconvstable}) is the Riemann-Liouville fractional integral  of order $\nu$  
%of the stable density $f_\alpha(\cdot\vert z)$, denoted by $\{I_+^\nu\, f_\alpha(\cdot\vert z)\}(u)$.

Differentiating the Laplace transform $e^{-z x^\alpha}$ of  $f_\alpha(u \vert z)$ gives
\begin{align}
%\widetilde f_\alpha(s\vert t) \equiv 
%\lambda\alpha x^{\alpha-1} e^{-\lambda x^\alpha} &= %\int_0^\infty e^{-xt}\,dF_\alpha(t\vert \lambda) =  
z\alpha x^{\alpha-1} e^{-z x^\alpha} &= %\int_0^\infty e^{-xt}\,dF_\alpha(t\vert \lambda) =  
\int_0^\infty e^{-x u} \, u f_\alpha(u \vert z)\, du
= z\alpha \int_0^\infty e^{-x u} \{\rho_{1-\alpha}\star f_\alpha(\cdot\vert z)\}(u)\, du
\label{eq:stableLTderiv}
\end{align}
The second equality arises because   $x^{\alpha-1} e^{-z x^\alpha}$ is the Laplace transform of  %the convolution 
$\{\rho_{1-\alpha}\star f_\alpha(\cdot\vert z)\}(u)$.  %where $\rho_{1-\alpha}(t)=t^{-\alpha}/\Gamma(1-\alpha)$.
Uniqueness of Laplace transforms  implies that,  %known property of the gamma-stable density 
for $\nu=1-\alpha$
\begin{align}
 \{\rho_{1-\alpha}\star f_\alpha(\cdot\vert z)\}(u) 
&= \frac{1}{\Gamma(1-\alpha)} \int_0^u (u-v)^{-\alpha} f_\alpha(u \vert z) \, dv
  =\frac{1}{ z\alpha}\,  u f_\alpha(u \vert z) 
\label{eq:stableLTderivdensity} 
\end{align}
%\end{proposition}
This  well-known property  has inspired the study of generalised stable densities $f_{\alpha,m}(u)$ 
defined by  $z^m f_{\alpha,m}(u) = \alpha\{\rho_{1-\alpha}\star  f_{\alpha,m}\}(u)$ for $m>0$
(Jedidi~{\it et al.}~\cite{Jedidi},  Pakes~\cite{Pakes}, Schneider~\cite{Schneider}).
%We shall not have cause for such generalisation here.

\section{Main Theorems}
\label{sec:theorems}

 \begin{theorem}
\label{thm:gen_rvproduct}

Consider two  independent  positive random variables
\begin{enumerate}
\setlength{\itemsep}{0pt}
\item $S_{\alpha;z} \sim F_\alpha(z>0)$ $(0<\alpha<1)$,  where $F_\alpha(z)$ is  the  $\alpha$-stable distribution with density $f_\alpha(\cdot\vert z)$ 
\item $U \sim F$, where $F$ is  some probability  distribution. % with density $f$.
%which  may have a density and dependence on certain  parameters
\end{enumerate}
Then the product $T_{\alpha,z}=S_{\alpha;z} \, U^{1/\alpha}$  has a distribution $H_\alpha(z)$ with mixture density
\begin{align}
h_\alpha(t \vert z)
&=  \int_0^\infty  f_\alpha(t \vert z u) \, dF(u)  %\equiv  \int_0^\infty  f_\sigma(t \vert z u) f(u) du 
\label{eq:gen_rvproduct_density}
\end{align}
%Given  the random variable $T_{\sigma;z} \sim H_\sigma(z)$, $T_{\sigma;z}  \overset{d}{=} S_{\sigma;z} \, U^{1/\sigma}$.
%where $T_{\sigma;z} \sim H_\sigma(z)$
with Laplace transform
%\begin{align}
% $\mathbb{E}\left[e^{-xS_{\alpha;z}U^{1/\alpha}}\right]  
 $\mathbb{E}\left[e^{-xT_{\alpha,z}}\right]=  \mathbb{E}\left[e^{-zx^\alpha U}\right]  (x\ge0)$. 
%\label{eq:gen_densityLT}
%\end{align}
%where $\widetilde f(x)$ is the Laplace transform of $f(u)$.
%Explicitly, if the Laplace transform of $f(u)$ is $\widetilde f(x)$, then the Laplace transform 
%$\widetilde h_\sigma(x \vert z)$ of $h_\sigma(t \vert z)$ is 
%the composition $\widetilde f(zx^\sigma)$ of $\widetilde f(x)$ with the completely monotone function  $zx^\sigma$
%\begin{align}
%\widetilde h_\sigma(x \vert z) &= \int_0^\infty e^{-x t} \, h_{\sigma}(t \vert z) \, dt   
%=  \int_0^\infty e^{-zx^\sigma u} \, f(u) \, du = \widetilde f(zx^\sigma)  \qquad (x\ge0)
%\label{eq:gen_densityLT}
%\end{align}
\end{theorem}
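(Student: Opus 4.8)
The plan is to condition on the value of $U$ and exploit the scaling property of the stable density recorded in Section~\ref{sec:stable}, namely $f_\alpha(t\vert z) = f_\alpha(t z^{-1/\alpha})\, z^{-1/\alpha}$ with $f_\alpha(t)\equiv f_\alpha(t\vert 1)$. First I would fix $u>0$ and identify the conditional law of $T_{\alpha,z}$ given $U=u$: on that event $T_{\alpha,z}=u^{1/\alpha}\,S_{\alpha;z}$, and since $S_{\alpha;z}$ has density $f_\alpha(\cdot\vert z)$, the elementary rule for scaling a positive random variable gives the conditional density $t\mapsto u^{-1/\alpha}\, f_\alpha\big(t\,u^{-1/\alpha}\,\big\vert\, z\big)$. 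Applying the scaling identity twice — once to write $f_\alpha\big(t u^{-1/\alpha}\,\big\vert\,z\big) = f_\alpha\big(t (zu)^{-1/\alpha}\big)\, z^{-1/\alpha}$, and once to recognise $(zu)^{-1/\alpha} f_\alpha\big(t (zu)^{-1/\alpha}\big) = f_\alpha(t\vert zu)$ — collapses this conditional density to $f_\alpha(t\vert zu)$.

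Integrating against the law of $U$ then yields~(\ref{eq:gen_rvproduct_density}) by the law of total probability; because the integrand is nonnegative there is no integrability issue, and the formula is meaningful as a Lebesgue--Stieltjes integral whether or not $F$ admits a density. For the Laplace transform I would again condition on $U=u$: by~(\ref{eq:stable}) evaluated at the argument $x u^{1/\alpha}$, $\mathbb{E}\big[e^{-x u^{1/\alpha} S_{\alpha;z}}\big] = e^{-z (x u^{1/\alpha})^\alpha} = e^{-z x^\alpha u}$, and taking the expectation over $U$ gives $\mathbb{E}\big[e^{-xT_{\alpha,z}}\big] = \mathbb{E}\big[e^{-z x^\alpha U}\big]$. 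As a consistency check one can instead Laplace-transform~(\ref{eq:gen_rvproduct_density}) in $t$, interchange the two integrals by Tonelli's theorem, and use~(\ref{eq:stable}) to recover the same expression; this simultaneously confirms that $h_\alpha(\cdot\vert z)$ integrates to one and is therefore a bona fide probability density.

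There is essentially no hard step here; the only points requiring care are the bookkeeping in the two applications of the stable scaling relation and the routine appeal to Tonelli to justify swapping the order of integration in the consistency check. The hypothesis $0<\alpha<1$ ensures $f_\alpha(\cdot\vert z)$ is a genuine density, so the conditional-density computation is literal rather than formal; the degenerate convention $f_{\alpha=1}(\cdot\vert z)=\delta(\cdot-z)$ would make the same argument go through symbolically but is not needed for the statement as given.
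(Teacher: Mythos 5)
Your proposal is correct and follows essentially the same route as the paper: both rest on the scaling identity $f_\alpha(t u^{-1/\alpha}\vert z)\,u^{-1/\alpha}=f_\alpha(t\vert zu)$ to obtain the mixture density (the paper phrases the conditioning step via a Dirac-delta change of variables rather than your conditional-density language), and then read off the Laplace transform from the stable transform~(\ref{eq:stable}). Your computing $\mathbb{E}\bigl[e^{-xT_{\alpha,z}}\bigr]$ by conditioning on $U$ first, with the Tonelli swap relegated to a consistency check, is only a minor reorganisation of the paper's direct transform of $h_\alpha(\cdot\vert z)$.
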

\begin{proof} [Proof of Theorem  $\ref{thm:gen_rvproduct}$]
\label{proof:gen_rvproduct}
%Let $(T=t, S_{\sigma;z}=x, U=u)$ where $x$ and $u$ are independent and $t=xu^{1/\sigma}$. 
The  density of $T_{\alpha,z}=S_{\alpha;z} \, U^{1/\alpha}\equiv t$ 
for  independent $S_{\alpha;z}\equiv x$ and $U\equiv u$, with $\Pr(S_{\alpha;z}\equiv x)=f_\alpha(x \vert z)$ is
% and  $\Pr(U=u)$ is  $f(u)$ where %for ease of discussion,  $dF(u)=f(u)du$.
\begin{align*}
%\Pr(t,x)%&= \Pr(t\vert x, u) \Pr(x)\Pr(u) 
% &=  \int_0^\infty \delta(t-xu^{1/\sigma})\,  f_\sigma(x \vert z) \, dF(u) \\
\Pr(t) 
&= \int_0^\infty \int_0^\infty  \delta(t-xu^{1/\alpha}) \, f_\alpha(x \vert z)  dx \, dF(u) \\
%(y=xu^{1/\sigma})\;
&= \int_0^\infty \int_0^\infty  \delta(t-y)\,  f_\alpha(y u^{-1/\alpha} \vert z) u^{-1/\alpha}  dy \, dF(u)\\
%&= \int_0^\infty  f_\sigma(t u^{-1/\sigma} \vert z) u^{-1/\sigma} f(u)  du \\
&= \int_0^\infty  f_\alpha(t \vert z u)  f(u) du \equiv h_\alpha(t \vert z)
\end{align*}
%which is the density $h_{\alpha,\theta; \sigma}(t \vert z)$ of $T_{\alpha,\theta,\sigma;z}=t$
where we have recalled  that $f_\alpha(t u^{-1/\alpha} \vert z) u^{-1/\alpha}\equiv f_\alpha(t\vert zu)$.
 The Laplace transform of~(\ref{eq:gen_rvproduct_density}) is %$h_{\alpha,\theta\vert \sigma}(t \vert \lambda)$ is
\begin{align*}
%\mathbb{E}\left[e^{-xS_{\alpha;z}U^{1/\alpha}}\right] 
\mathbb{E}\left[e^{-xT_{\alpha,z}}\right]
&= \int_0^\infty e^{-x t} \, h_{\alpha}(t \vert z) \, dt  
=  \int_0^\infty e^{-zx^\alpha u} \, dF(u) =  \mathbb{E}\left[e^{-zx^\alpha U}\right] 
  \qquad (x\ge0)
\end{align*}
thereby completing the proof.
\end{proof}

\begin{corollary}
\label{cor:hPollardinfsum}
 If $F$ % in Theorem~$\ref{thm:CompositionLaplace}$ 
 has  finite moments $\{\mu_k\}$ of all order,
%\begin{align}
%  \mu_k &= \int_0^\infty  u^k \,dF(u)  \equiv (-1)^k \psi^{(k)}(0)   < \infty \qquad k\ge0
%  \label{eq:momentsF}
%\end{align}
then $h_\alpha(t \vert z)$ admits the series form
\begin{align}
h_\alpha(t\vert z)
% &= -\frac{1}{\pi} \sum_{k=0}^\infty \frac{(-\lambda)^k}{k!} \sin(\pi\alpha k) \,  M_k \int_0^\infty e^{-tv} \, v^{\alpha k} \, dv
%\label{eq:hPollardinfsum1} \\
 &= -\frac{1}{\pi} \sum_{k=1}^\infty \frac{(-z)^k}{k!} \sin(\pi\alpha k) \,\frac{\Gamma(\alpha k+1)}{t^{\alpha k+1}} \;  \mu_k 
\label{eq:hPollardinfsum} 
\end{align}
%where $\{M_k\}$ are the moments of $F(t)$.
 %the integral being the Laplace transform of the composition $\psi(\lambda x^\alpha)$ for  $\lambda\to \lambda e^{-i\pi\alpha}$ 
\end{corollary}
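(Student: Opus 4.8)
The plan is to substitute Pollard's series representation of the stable density, equation~(\ref{eq:Pollardinfsum2}), into the mixture density formula~(\ref{eq:gen_rvproduct_density}) of Theorem~\ref{thm:gen_rvproduct}, and then integrate term by term against $dF$. Concretely, I would start from
\begin{align*}
h_\alpha(t\vert z) = \int_0^\infty f_\alpha(t\vert zu)\, dF(u),
\end{align*}
and replace $f_\alpha(t\vert zu)$ by its Pollard expansion with scale factor $zu$ in place of $z$, namely
\begin{align*}
f_\alpha(t\vert zu) = -\frac{1}{\pi}\sum_{k=1}^\infty \frac{(-zu)^k}{k!}\sin(\pi\alpha k)\,\frac{\Gamma(\alpha k+1)}{t^{\alpha k+1}}.
\end{align*}
The factor $u^k$ is the only piece depending on $u$, so interchanging sum and integral produces $\int_0^\infty u^k\, dF(u) = \mu_k$, which gives exactly~(\ref{eq:hPollardinfsum}).

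The two steps that need care are: first, confirming that the $u$-dependence in Pollard's series really does reduce to the clean power $u^k$ — this follows since $(-zu)^k = (-z)^k u^k$ and every other factor in the $k$-th term ($\sin(\pi\alpha k)$, $\Gamma(\alpha k+1)$, $t^{-\alpha k-1}$) is independent of $u$; and second, justifying the interchange of summation and integration. For the latter I would invoke dominated convergence (or Fubini–Tonelli) using the hypothesis that $F$ has finite moments of all orders: the partial sums of Pollard's series are dominated uniformly in a way compatible with integrating against $dF$, and since each $\mu_k$ is finite the resulting series is a legitimate term-by-term evaluation. Pollard's representation~(\ref{eq:Pollard})–(\ref{eq:Pollardinfsum2}) already encodes the convergence of the series for fixed scale parameter, so the only genuinely new ingredient is the finiteness of the moments $\mu_k$, which is precisely the hypothesis of the corollary.

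The main obstacle, such as it is, is the rigorous justification of the interchange of $\sum_k$ and $\int dF$; this is a routine but not entirely trivial application of a convergence theorem, and one should note that Pollard's series is obtained from the integral~(\ref{eq:Pollard}) by expanding $e^{-z(e^{-i\pi}v)^\alpha}$ and integrating, so strictly the cleanest route may be to work with the integral form~(\ref{eq:Pollard}) throughout: write $h_\alpha(t\vert z) = \frac{1}{\pi}\,{\rm Im}\int_0^\infty e^{-tv}\big(\int_0^\infty e^{-zu(e^{-i\pi}v)^\alpha}dF(u)\big)dv$, recognise the inner integral as $\mathbb{E}[e^{-zU(e^{-i\pi}v)^\alpha}]$, expand that in powers of its argument using the moments $\{\mu_k\}$, and then integrate over $v$ as in the passage from~(\ref{eq:Pollardinfsum1}) to~(\ref{eq:Pollardinfsum2}). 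Either route yields~(\ref{eq:hPollardinfsum}); I would present the short formal computation and remark that the moment hypothesis supplies the absolute convergence needed to legitimise it.
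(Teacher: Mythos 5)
Your proposal is correct and follows essentially the same route as the paper: substitute Pollard's series representation of $f_\alpha(t\vert zu)$ into the mixture density~(\ref{eq:gen_rvproduct_density}), interchange the sum with the integral against $dF$, and recognise $\int_0^\infty u^k\,dF(u)=\mu_k$ (the paper works from the intermediate form~(\ref{eq:Pollardinfsum1}), you from~(\ref{eq:Pollardinfsum2}), which is an immaterial difference). Your remarks on justifying the interchange go slightly beyond the paper, which performs it formally without comment, so nothing is missing relative to the paper's own argument.
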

% It is by no means obvious that~(\ref{eq:wPollardinfsum1}), (\ref{eq:wPollardinfsum2})  represent a valid density in isolation,
% such comfort only  arises at a glance from the initial form~(\ref{eq:w}) of $w_\alpha(t\vert \lambda)$. 

\begin{proof}[Proof of Corollary~$\ref{cor:hPollardinfsum}$]
Using the infinite series  representation~(\ref{eq:Pollardinfsum1}) of  $f_\alpha(t\vert z u)$ in~(\ref{eq:gen_rvproduct_density}) gives
\begin{align*}
h_\alpha(t\vert z)
  &= -\frac{1}{\pi}{\rm Im} \int_0^\infty  \left[\sum_{k=0}^\infty \frac{(-z u)^k}{k!} e^{-i\pi\alpha k} 
        \int_0^\infty e^{-tv} \, v^{\alpha k} \, dv \right]dF(u) \\
  &= -\frac{1}{\pi} \sum_{k=1}^\infty \frac{(-z)^k}{k!} \sin(\pi\alpha k) \, \frac{\Gamma(\alpha k+1)}{t^{\alpha k+1}} 
   \int_0^\infty u^k \, dF(u)
\end{align*}
The rightmost integral is $\mu_k$, thereby proving~(\ref{eq:hPollardinfsum}).
 \end{proof}

\begin{theorem}
\label{thm:rho_stable_convolution}
%The  product 
Let  $\rho_\nu(t)=t^{\nu-1}/\Gamma(\nu)$ $(\nu>0)$. %then $r(x)\to x^{-\nu}$. 
Then
% Then the convolution density $\{\rho_{\alpha\nu} \star w_\alpha(\cdot\vert \lambda,F)\}(t)$ is given by
\begin{align}
\{\rho_{\nu} \star f_\alpha(\cdot\vert z)\}(t)
&=   z^{\nu/\alpha} \int_0^\infty f_\alpha(t\vert z u)\, \{\rho_{\nu/\alpha} \star \delta_1\}(u) du 
\label{eq:power_stable_conv}  \\
&=   z^{\nu/\alpha} \int_1^\infty f_\alpha(t\vert z u)\, \rho_{\nu/\alpha} (u-1) \,  du 
\label{eq:power_stable_conv1}  \\
&=  \frac{z^{\nu/\alpha}}{\Gamma(\nu/\alpha)}
  \int_0^1 f_\alpha(t\vert z/u) \, u^{-\nu/\alpha}  (1-u)^{\nu/\alpha-1}  \, \frac{du}{u} 
  \label{eq:power_stable_conv_betadensity} 
%\{\rho_{\nu} \star h_\alpha(\cdot\vert z)\}(t) 
%&= \int_0^\infty \{\rho_{\nu} \star f_\alpha(\cdot\vert z u)\}(t) f(u) \, du \nonumber \\
%&=  z^{\nu/\alpha}  \int_0^\infty \int_0^\infty f_\alpha(t\vert z v)\, \{\rho_{\nu/\alpha} \star \delta_1\}(v) dv f(u) \, du 
%\label{eq:convrho_h} 
\end{align}
%where  $\{\rho_\nu \star f\}(u)$ is the convolution of $\rho_\nu(t)$ and the density $f(t)$ of $F(t)$.
\end{theorem}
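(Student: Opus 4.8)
The plan is to verify the identity by comparing Laplace transforms in $t$, exploiting the fact that the Laplace transform of $\{\rho_\nu\star f_\alpha(\cdot\vert z)\}$ is $x^{-\nu}e^{-zx^\alpha}$, as recorded after~(\ref{eq:rhoconvstable}). First I would establish~(\ref{eq:power_stable_conv}). Observe that $\{\rho_{\nu/\alpha}\star\delta_1\}(u)$ is a (defective) density on $u\ge 1$: it is the convolution of the power density with a unit mass at $1$, so it has Laplace transform $x^{-\nu/\alpha}e^{-x}$ in $u$. Thus the right-hand side of~(\ref{eq:power_stable_conv}) is, up to the prefactor $z^{\nu/\alpha}$, exactly a mixture of the form $\int_0^\infty f_\alpha(t\vert zu)\,dF(u)$ as in Theorem~\ref{thm:gen_rvproduct}, with $dF(u)=\{\rho_{\nu/\alpha}\star\delta_1\}(u)\,du$. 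By the Laplace-transform formula from that theorem, $\int_0^\infty e^{-xt} f_\alpha(t\vert zu)\,du$-mixture has transform $\mathbb E[e^{-zx^\alpha U}]$; substituting the known Laplace transform of the mixing measure gives transform $z^{\nu/\alpha}\cdot (zx^\alpha)^{-\nu/\alpha} e^{-zx^\alpha} = x^{-\nu}e^{-zx^\alpha}$, matching the left side. Uniqueness of Laplace transforms then yields~(\ref{eq:power_stable_conv}).

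Next, (\ref{eq:power_stable_conv1}) is just the explicit evaluation of the inner convolution: since $\delta_1$ places unit mass at $u=1$, $\{\rho_{\nu/\alpha}\star\delta_1\}(u)=\rho_{\nu/\alpha}(u-1)$ for $u>1$ and $0$ otherwise, so the integral over $(0,\infty)$ collapses to an integral over $(1,\infty)$. Finally,~(\ref{eq:power_stable_conv_betadensity}) follows from~(\ref{eq:power_stable_conv1}) by the substitution $u\mapsto 1/u$: then $u\in(1,\infty)$ becomes $u\in(0,1)$, $du = -u^{-2}\,du$ in the new variable (so $du/u^{2}$ turns into $du$, equivalently $du\mapsto du/u\cdot u^{-1}$), $f_\alpha(t\vert zu)\mapsto f_\alpha(t\vert z/u)$, and $\rho_{\nu/\alpha}(u-1)=(u-1)^{\nu/\alpha-1}/\Gamma(\nu/\alpha)\mapsto (1/u-1)^{\nu/\alpha-1}/\Gamma(\nu/\alpha) = u^{-(\nu/\alpha-1)}(1-u)^{\nu/\alpha-1}/\Gamma(\nu/\alpha)$. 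Collecting the powers of $u$ from the Jacobian and from $\rho_{\nu/\alpha}$ produces exactly $u^{-\nu/\alpha}(1-u)^{\nu/\alpha-1}\,du/u$, which is the claimed beta-type kernel.

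The only genuine subtlety — and the step I would be most careful about — is justifying the interchange of the Laplace integral with the mixing integral over $u$ (Fubini/Tonelli) and confirming that $\{\rho_{\nu/\alpha}\star\delta_1\}$ qualifies as a bona fide mixing density in the sense needed to invoke Theorem~\ref{thm:gen_rvproduct}; since $\rho_{\nu/\alpha}(u-1)\ge 0$ and has finite total mass only after the $z^{\nu/\alpha}$ normalisation is absorbed differently, one should phrase the argument at the level of the identity of Laplace transforms of nonnegative functions rather than literally as a probabilistic mixture. All integrands are nonnegative, so Tonelli applies directly and no integrability hypothesis beyond $\nu>0$ is needed. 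With that in hand, the chain~(\ref{eq:power_stable_conv})–(\ref{eq:power_stable_conv_betadensity}) is complete. I would also double-check the boundary/degenerate cases $\alpha=1$ (where $f_1(\cdot\vert z)=\delta_z$) and the limiting reading $\nu=0$ (where $\rho_0=\delta$) to confirm the formulas reduce consistently, though these are not strictly required for the statement as written.
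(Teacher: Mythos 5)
Your proposal is correct and follows essentially the same route as the paper: verify~(\ref{eq:power_stable_conv}) by matching the Laplace transform $x^{-\nu}e^{-zx^\alpha}$ on both sides (using $\int_0^\infty e^{-xt}f_\alpha(t\vert zu)\,dt=e^{-zux^\alpha}$ and the transform of $\rho_{\nu/\alpha}\star\delta_1$ evaluated at $zx^\alpha$), then obtain~(\ref{eq:power_stable_conv1}) by evaluating the convolution with $\delta_1$ explicitly, and~(\ref{eq:power_stable_conv_betadensity}) by the substitution $u\to 1/u$. Your extra remark about treating $\rho_{\nu/\alpha}\star\delta_1$ as a nonnegative kernel (not a probability density) and arguing via Tonelli and uniqueness of Laplace transforms rather than literally through Theorem~\ref{thm:gen_rvproduct} is the right way to phrase it, and is what the paper's proof does implicitly.
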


\begin{proof}[Proof of Theorem~$\ref{thm:rho_stable_convolution}$]
\label{proof:rho_stable_convolution}
%By the convolution  theorem, 
The Laplace  transform of   $\{\rho_{\nu} \star f_\alpha(\cdot\vert z)\}(t)$ is $x^{-\nu}e^{-zx^\alpha}$
while that of the right hand side of~(\ref{eq:power_stable_conv}) is
\begin{align*}
z^{\nu/\alpha} \int_0^\infty e^{-zu x^\alpha}  \, \{\rho_{\nu/\alpha} \star \delta_1\}(u) du 
&= z^{\nu/\alpha} (zx^\alpha)^{-\nu/\alpha} e^{-zx^\alpha} =  x^{-\nu} e^{-zx^\alpha}
\end{align*}
Hence the validity of~(\ref{eq:power_stable_conv}) by uniqueness of Laplace transforms.
%The convolution $\{\rho_{\nu/\alpha} \star \delta_1\}$ may be written  explicitly as
Explicitly, $\{\rho_{\nu/\alpha} \star \delta_1\}$ is
\begin{align*}
\{\rho_{\nu/\alpha} \star \delta_1\}(u) &= \int_0^u \rho_{\nu/\alpha} (u-v) \delta(v-1) dv  
= \rho_{\nu/\alpha}(u-1) \mathbbm{1}_{[1,\infty)} 
\end{align*}
which leads to~(\ref{eq:power_stable_conv1}).
Using  $\rho_{\nu/\alpha}(u)=u^{\nu/\alpha-1}/\Gamma(\nu/\alpha)$ and  $u\to1/u$ in~(\ref{eq:power_stable_conv1})
gives~(\ref{eq:power_stable_conv_betadensity}).
 \end{proof}

\section{Generalised Positive Linnik Law}
\label{sec:Linnik}

 \begin{proposition}
\label{prop:Linnik_rvproduct}
Consider two  independent positive random variables
\begin{enumerate}
\setlength{\itemsep}{0pt}
\item $S_{\alpha;z} \sim F_\alpha(z)$ $(0<\alpha<1)$,  where $F_\alpha(z)$ is  the  
$\alpha$-stable distribution, density $f_\alpha(\cdot\vert z)$ 
\item $G_{\gamma, \lambda}\sim G(\gamma>0, z>0)$, where  $G(\gamma, \lambda)$ is the   Gamma distribution. 
% with   shape and scale  $\gamma$,  $\lambda$.
\end{enumerate}

Then the product $S_{\alpha;z}\, G_{\gamma, \lambda}^{1/\alpha}$ has a distribution  on  $(0,\infty)$ %$L_\alpha(\gamma,\lambda,z)$ 
with density
\begin{align}
\ell_\alpha(x\vert \gamma, \lambda,z) &= \int_0^\infty f_\alpha(x\vert  zu) \, dG(u\vert \gamma, \lambda)
= \lambda^\gamma \int_0^\infty f_\alpha(x\vert  zu) \,  \rho_\gamma(u) \, e^{-\lambda u} \, du
%= \frac{\lambda^\gamma}{\Gamma(\gamma)} \int_0^\infty f_\tlpha(x\vert  zu) \,  u^{\gamma-1} e^{-\lambda u} \, du
% = \frac{\lambda^\gamma}{\Gamma(\gamma)} \int_0^\infty f_\alpha(t\vert zu)  t^{\gamma-1} e^{-\lambda t} dt
\label{eq:Linnik_density}
\end{align}
%The  Laplace transform   of the density of  $G(\gamma, z)$ is $z^\gamma/\left(z+s\right)^\gamma$. 
The Laplace transform %$\widetilde \ell_\alpha(s\vert \gamma, \lambda,z)$ 
of~$(\ref{eq:Linnik_density})$ is %the composition 
\begin{align}
%\phi(\lambda s^\alpha\vert \gamma,z) %\equiv \int_0^\infty e^{-xt} \ell_\alpha(t\vert \gamma,\lambda, z) dt
%\widetilde \ell_\alpha(s\vert \gamma, \lambda,z) 
% \mathbb{E}\left[e^{-sS_{\alpha;z}G_{\gamma, \lambda}^{1/\alpha}}\right]  &= 
 \mathbb{E}\left[e^{-zs^\alpha G_{\gamma, \lambda}}\right]   
&\equiv \int_0^\infty e^{- zs^\alpha x} \,  dG(x\vert \gamma, \lambda)
= \left(\frac{\lambda}{\lambda+  zs^\alpha}\right)^\gamma \quad (s\ge 0)
\label{eq:Linnik_densityLT}
\end{align}
 \end{proposition}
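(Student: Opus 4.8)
The plan is to recognise Proposition~\ref{prop:Linnik_rvproduct} as a direct specialisation of Theorem~\ref{thm:gen_rvproduct} to the case $U\sim G(\gamma,\lambda)$, so that almost all of the work is already done. First I would invoke Theorem~\ref{thm:gen_rvproduct} with $F=G(\gamma,\lambda)$ the gamma distribution: since the gamma distribution is indeed a probability distribution on $(0,\infty)$, the theorem applies verbatim and gives that $T_{\alpha,z}=S_{\alpha;z}\,G_{\gamma,\lambda}^{1/\alpha}$ has density
\begin{align*}
h_\alpha(t\vert z)=\int_0^\infty f_\alpha(t\vert zu)\,dG(u\vert\gamma,\lambda),
\end{align*}
which is exactly~(\ref{eq:Linnik_density}) once we write $dG(u\vert\gamma,\lambda)=\lambda^\gamma\rho_\gamma(u)e^{-\lambda u}\,du$ using~(\ref{eq:gamma}) and rename $t$ to $x$; I would denote this density $\ell_\alpha(x\vert\gamma,\lambda,z)$.

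Next, for the Laplace transform~(\ref{eq:Linnik_densityLT}), I would again quote Theorem~\ref{thm:gen_rvproduct}, whose conclusion is $\mathbb{E}[e^{-sT_{\alpha,z}}]=\mathbb{E}[e^{-zs^\alpha U}]$ with $U=G_{\gamma,\lambda}$. The right-hand side is then the Laplace--Stieltjes transform of the gamma distribution evaluated at the argument $zs^\alpha$, i.e.\ by~(\ref{eq:gammaLT}),
\begin{align*}
\mathbb{E}\!\left[e^{-zs^\alpha G_{\gamma,\lambda}}\right]=\int_0^\infty e^{-zs^\alpha x}\,dG(x\vert\gamma,\lambda)=\left(\frac{\lambda}{\lambda+zs^\alpha}\right)^{\!\gamma},
\end{align*}
valid for $s\ge0$ since then $zs^\alpha\ge0$ lies in the domain of~(\ref{eq:gammaLT}). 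This yields~(\ref{eq:Linnik_densityLT}) immediately.

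Since Proposition~\ref{prop:Linnik_rvproduct} is a pure instantiation of Theorem~\ref{thm:gen_rvproduct}, there is no real obstacle; the only points deserving a word of care are bookkeeping ones. One should note the notational clash in the statement, where the second bullet writes $G_{\gamma,\lambda}\sim G(\gamma>0,z>0)$ — the second parameter is the scale $\lambda$, not $z$, and $z$ is the (independent) stable scale parameter; I would state this explicitly so the reader is not confused, and use $s$ (not $x$) as the Laplace variable to match~(\ref{eq:Linnik_densityLT}). One might also remark, as an optional closing sentence, that this recovers Pakes' generalised positive Linnik law: the normalised case $\lambda=1$, $z=1$ gives the law of $S_{\alpha;1}\,G_{\gamma,1}^{1/\alpha}$ with Laplace transform $(1+s^\alpha)^{-\gamma}$, the familiar Linnik form. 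No further estimates or convergence arguments are needed, as the interchange of integration implicit in~(\ref{eq:Linnik_density}) is already justified inside the proof of Theorem~\ref{thm:gen_rvproduct}.
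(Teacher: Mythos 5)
Your proposal is correct and is essentially the paper's proof: the paper likewise just sets $U\equiv G_{\gamma,\lambda}\sim F\equiv G(\gamma,\lambda)$ in Theorem~\ref{thm:gen_rvproduct}, with the Laplace transform then being the gamma transform~(\ref{eq:gammaLT}) evaluated at $zs^\alpha$. Your extra remarks (the $\lambda$ versus $z$ notational slip and the link to Pakes' Linnik law) are accurate bookkeeping but do not change the argument.
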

\begin{proof} [Proof of Proposition~$\ref{prop:Linnik_rvproduct}$]
\label{proof:Linnik_rvproduct}
Set  $U\equiv G_{\gamma, \lambda}\sim F \equiv G(\gamma, \lambda)$ in Theorem~\ref{thm:gen_rvproduct}.
\end{proof}

%\subsection{Background} % on Linnik Law}
%\label{sec:Linnik_background}

\begin{remark}
\label{rem:Linnik}
%As  noted earlier, %this   is not new. 
Pakes~{\rm \cite[(4.4)]{Pakes95}} discussed  $S_{\alpha;z}\, G_{\gamma, 1}^{1/\alpha}$ with 
Laplace transform $(1+zs^\alpha)^{-\gamma}$, which he termed the  generalised positive  Linnik law,
after the  $\gamma=1$ symmetric case due to Linnik $($see Devroye~{\rm \cite{Devroye_Linnik}} for a concise discussion of the latter$)$.
%This is equivalent to the product $S_{\alpha;z/\lambda}\, G_{\gamma, 1}^{1/\alpha}$  in Proposition~$\ref{prop:Linnik_rvproduct}$. 
\end{remark}

The rest of this section is based on  $S_{\alpha;1}\, G_{\gamma, \lambda}^{1/\alpha}$,  which induces
$\ell_\alpha(\cdot \vert \gamma, \lambda)\equiv\ell_\alpha(\cdot \vert \gamma, \lambda,z=1)$.
The following lemma,  involving gamma-Linnik convolution,  underpins subsequent discussion.

%We summarise some properties involving  gamma-Linnik convolution. % between $\rho_\nu(x)$ and  the Linnik density~(\ref{eq:Linnik_density})
\begin{lemma}
\label{lem:gammaLinnik_conv}
Let  $\rho_\nu(x)=x^{\nu-1}/\Gamma(\nu)$ $(x,\nu>0)$ and $\rho_0(x)=\delta(x)$.
Then, for $\nu=\beta-\alpha\gamma\ge0$ 
 \begin{align}
\{\rho_{\beta-\alpha\gamma} \star \ell_\alpha(\cdot \vert \gamma, \lambda)\}(x) 
&=  \lambda^{\gamma} \, x^{\beta-1}E^\gamma_{\alpha,\beta}(-\lambda x^\alpha)
\label{eq:gammaLinnik_conv} 
\end{align}
%Furthermore, 
The Laplace transform of the  density  
$\{\rho_{\beta-\alpha\gamma} \star f_\alpha(\cdot\vert  t)\}(x)\rho_\gamma(t)$, taken over $t$ for given $x$, is
\begin{align}
\int_0^\infty e^{-\lambda t} \, \{\rho_{\beta-\alpha\gamma} \star f_\alpha(\cdot\vert  t)\}(x) \, \rho_\gamma(t) \, dt 
%&= \frac{1}{\lambda^{\gamma}} \,  \{\rho_\nu \star \ell_\alpha(\cdot \vert \gamma, \lambda)\}(1)
&= x^{\beta-1}E^\gamma_{\alpha,\beta}(-\lambda x^\alpha)  \qquad (\lambda\ge0)
\label{eq:gammaLinnik_conv_LT_x}
 \end{align}
Hence the  Laplace transform of the probability density  $\Gamma(\beta)\{\rho_{\beta-\alpha\gamma} \star f_\alpha(\cdot\vert  t)\}(1)\rho_\gamma(t)$ is 
\begin{align}
\Gamma(\beta) \int_0^\infty e^{-\lambda t} \, \{\rho_{\beta-\alpha\gamma} \star f_\alpha(\cdot\vert  t)\}(1) \, \rho_\gamma(t) \, dt 
%&= \frac{1}{\lambda^{\gamma}} \,  \{\rho_\nu \star \ell_\alpha(\cdot \vert \gamma, \lambda)\}(1)
&= \Gamma(\beta)E^\gamma_{\alpha,\beta}(-\lambda)  \qquad (\lambda\ge0)
\label{eq:gammaLinnik_conv_LT}
 \end{align}
where $E^\gamma_{\alpha,\beta}(\cdot)$ is the Mittag-Leffler function~$(\ref{eq:ML3parseries})$, with $E^\gamma_{\alpha,\beta}(0)=1/\Gamma(\beta)$.
\end{lemma}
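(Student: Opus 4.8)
\textbf{Proof proposal for Lemma~\ref{lem:gammaLinnik_conv}.}
The plan is to establish \eqref{eq:gammaLinnik_conv} by comparing Laplace transforms in the spatial variable $x$, then obtain \eqref{eq:gammaLinnik_conv_LT_x} and \eqref{eq:gammaLinnik_conv_LT} by applying Theorem~\ref{thm:rho_stable_convolution} to rewrite the convolution as an integral against a gamma weight in the scale parameter. First I would compute the Laplace transform (over $x$) of the left side of \eqref{eq:gammaLinnik_conv}. By Proposition~\ref{prop:Linnik_rvproduct} with $z=1$, the density $\ell_\alpha(\cdot\vert\gamma,\lambda)$ has Laplace transform $\bigl(\lambda/(\lambda+s^\alpha)\bigr)^\gamma$, and convolving with $\rho_{\beta-\alpha\gamma}$ multiplies this by $s^{-(\beta-\alpha\gamma)}$ (using \eqref{eq:powerLT}, with the $\nu=0$ case handled by the convention $\rho_0=\delta$). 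Thus the transform is $\lambda^\gamma s^{-\beta}(1+\lambda^{-1}s^\alpha)^{-\gamma} \cdot \lambda^{-\gamma}\cdot\lambda^\gamma$; organizing the algebra, it equals $\lambda^\gamma\, s^{\alpha\gamma-\beta}(\lambda+s^\alpha)^{-\gamma}$. On the other side, I would invoke the known Laplace transform identity for the three-parameter Mittag-Leffler function, namely that $x^{\beta-1}E^\gamma_{\alpha,\beta}(-\lambda x^\alpha)$ has Laplace transform $s^{\alpha\gamma-\beta}(\lambda+s^\alpha)^{-\gamma}$ for $\Re(s)$ large; this is the standard formula (traceable to the series \eqref{eq:ML3parseries} integrated term by term, or citable from the Mittag-Leffler literature the paper already references). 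Matching the two transforms and appealing to uniqueness gives \eqref{eq:gammaLinnik_conv}.

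Next, for \eqref{eq:gammaLinnik_conv_LT_x}, I would start from \eqref{eq:gammaLinnik_conv} written with a general scale: replacing the Linnik density by its mixture form \eqref{eq:Linnik_density}, the convolution $\rho_{\beta-\alpha\gamma}\star\ell_\alpha(\cdot\vert\gamma,\lambda)$ becomes $\lambda^\gamma\int_0^\infty \{\rho_{\beta-\alpha\gamma}\star f_\alpha(\cdot\vert t)\}(x)\,\rho_\gamma(t)\,e^{-\lambda t}\,dt$, since convolution in $x$ commutes with the mixing integral over $t$ and $\rho_\gamma(t)e^{-\lambda t}=dG(t\vert\gamma,\lambda)/\lambda^\gamma$. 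Comparing this with the right side of \eqref{eq:gammaLinnik_conv}, dividing by $\lambda^\gamma$, and reading the result as a statement in $\lambda$ yields exactly \eqref{eq:gammaLinnik_conv_LT_x}: the integral displayed there is the Laplace transform (in the variable $t$, with parameter $\lambda$) of the function $t\mapsto\{\rho_{\beta-\alpha\gamma}\star f_\alpha(\cdot\vert t)\}(x)\,\rho_\gamma(t)$, and it equals $x^{\beta-1}E^\gamma_{\alpha,\beta}(-\lambda x^\alpha)$. Finally, \eqref{eq:gammaLinnik_conv_LT} is the specialization $x=1$ multiplied through by $\Gamma(\beta)$, so that $\Gamma(\beta)E^\gamma_{\alpha,\beta}(-\lambda)$ appears on the right, which is a probability density's Laplace transform in view of $\Gamma(\beta)E^\gamma_{\alpha,\beta}(0)=1$ from $E^\gamma_{\alpha,\beta}(0)=1/\Gamma(\beta)$.

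The main obstacle I anticipate is bookkeeping rather than conceptual: one must be careful about the region of validity of the Mittag-Leffler Laplace transform (it holds for $|\lambda s^{-\alpha}|<1$ as a convergent series, then extends by analytic continuation), and about the degenerate case $\beta=\alpha\gamma$ where $\rho_{\beta-\alpha\gamma}=\delta$ so that no genuine convolution occurs — here the identity reduces to \eqref{eq:Linnik_densityLT} rewritten via the series $E^\gamma_{\alpha,\alpha\gamma}$, and it is worth checking the constant $\Gamma(\beta)$ normalization survives. A secondary point is justifying the interchange of the $x$-convolution with the $t$-mixing integral and with term-by-term Laplace inversion; this is routine by Fubini/Tonelli given the nonnegativity of all densities and the absolute convergence of the Mittag-Leffler series, but should be stated. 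One should also note that \eqref{eq:gammaLinnik_conv_LT_x} does not by itself require $\beta-\alpha\gamma\ge 0$ to be read as a convolution — it is simply the defining integral of $E^\gamma_{\alpha,\beta}$ against $\rho_\gamma(t)e^{-\lambda t}$ — but the left-hand side's interpretation as $\{\rho_{\beta-\alpha\gamma}\star f_\alpha\}$ does, which is why the hypothesis is imposed.
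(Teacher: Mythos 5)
Your proposal is correct and follows essentially the same route as the paper: you establish \eqref{eq:gammaLinnik_conv} by matching the Laplace transform $\lambda^\gamma s^{\alpha\gamma-\beta}(\lambda+s^\alpha)^{-\gamma}$ of the convolution against the Prabhakar transform identity \eqref{eq:ML3parseriesLT} and invoking uniqueness, then unpack the Linnik mixture representation \eqref{eq:Linnik_density} to read off the $t$-Laplace transform \eqref{eq:gammaLinnik_conv_LT_x}, and finally set $x=1$ and normalise by $\Gamma(\beta)$ using $E^\gamma_{\alpha,\beta}(0)=1/\Gamma(\beta)$, exactly as the paper does (your added remarks on Fubini, analytic continuation and the degenerate case $\beta=\alpha\gamma$ are harmless extras).
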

\begin{proof} [Proof of Lemma~$\ref{lem:gammaLinnik_conv}$]
\label{proof:gammaLinnik_conv}
We recall that the Laplace transform of $\rho_\nu(x)$ is $s^{-\nu}$ $(\nu\ge0, s>0)$.
Hence, by the convolution theorem, the Laplace transform of $\{\rho_{\beta-\alpha\gamma}\star \ell_\alpha(\cdot \vert \gamma, \lambda)\}(x)$ is
%between the power density $\rho_\nu(x)$ $(\nu\ge0)$ and  the Linnik density~(\ref{eq:Linnik_density})  is
 \begin{align*}
 \int_0^\infty e^{-s x}  \{\rho_{\beta-\alpha\gamma}\star \ell_\alpha(\cdot \vert \gamma, \lambda)\}(x)\, dx
&=  \lambda^{\gamma} \, \frac{s^{\alpha\gamma-\beta}}{(\lambda+ s^\alpha)^{\gamma}} 
%\label{eq:Linnik_convdensityLT}
\end{align*}
By~(\ref{eq:ML3parseriesLT}), the right hand side is also the Laplace transform of 
$\lambda^{\gamma} \, x^{\beta-1}E^\gamma_{\alpha,\beta}(-\lambda x^\alpha)$.
Hence the validity of~(\ref{eq:gammaLinnik_conv})  by uniqueness of Laplace transforms.
Combining~(\ref{eq:Linnik_density}) and~(\ref{eq:gammaLinnik_conv}) gives~(\ref{eq:gammaLinnik_conv_LT_x})
 \begin{align*}
\int_0^\infty \{\rho_{\beta-\alpha\gamma} \star f_\alpha(\cdot\vert  t)\}(x)\rho_\gamma(t) \,  e^{-\lambda t} \, dt
&= \lambda^{-\gamma} \{\rho_{\beta-\alpha\gamma}\star \ell_\alpha(\cdot \vert \gamma, \lambda)\}(x)
=  x^{\beta-1}E^\gamma_{\alpha,\beta}(-\lambda x^\alpha)
\end{align*}
Setting $x=1$  and  dividing by $E^\gamma_{\alpha,\beta}(0)=1/\Gamma(\beta)$ to normalise
gives~(\ref{eq:gammaLinnik_conv_LT}), valid for $\lambda\ge0$.
\end{proof} 
\begin{remark}
\label{rem:gammaLinnik_conv}
Lemma~$\ref{lem:gammaLinnik_conv}$ rests on the idea  that 
$\{\rho_{\beta-\alpha\gamma} \star f_\alpha(\cdot\vert  t)\}(x)\rho_\gamma(t)$ is a  two-dimensional density in $(x,t)$, %so that the corresponding 
whose two-dimensional  Laplace transform $s^{\alpha\gamma-\beta}/(\lambda+ s^\alpha)^{\gamma}$  in $(\lambda,s)$ can be 
evaluated  as a Laplace transform in $s$ taken over  $x$, followed by a Laplace transform in $\lambda$ taken over $t$, or the other way round.
This argument  is inspired by Feller~{\rm \cite[XIII.8(b)]{Feller2}}.
\end{remark}

%This is the foundation for a convolution approach to  the Mittag-Leffler laws,  discussed next. % in~\ref{sec:ML_laws}.

\section{Mittag-Leffler Laws from Linnik Laws}
\label{sec:ML_laws}

We derive a convolution-based representation of Mittag-Leffler densities.
%(as a special case of a more general  convolution-based representation).
\begin{proposition}
\label{prop:ML_laws}
The Mittag-Leffler distribution $P_{\alpha,\theta} \equiv{\rm ML}(\alpha,\theta)$  has the   density %$p_{\alpha,\theta}(t)$
\begin{align}
p_{\alpha,\theta}(t) &= \frac{\Gamma(1+\theta)}{\Gamma(1+\theta/\alpha)} \,  t^{\theta/\alpha} \, p_\alpha(t) 
\qquad (0<\alpha<1,\, \theta>-\alpha)
 \label{eq:ML2parconv_density} \\
% \intertext{where $p_\alpha(t)\equiv p_{\alpha,0}(t)$ is the convolution density}
 p_\alpha(t) &=  \{\rho_{1-\alpha}\star f_\alpha(\cdot\vert t) \}(1) = \frac{1}{\alpha t} \, f_\alpha(1\vert t)
   \equiv   \frac{1}{\alpha} \,  f_\alpha(t^{-1/\alpha}) \, t^{-1/\alpha-1}
\label{ML1parconv_density} 
\end{align}
The Laplace transform of $p_{\alpha,\theta}(t)$ for $x\ge0$  is 
$\Gamma(1+\theta)E^{1+\theta/\alpha}_{\alpha,1+\theta}(-x)$ $(\theta>-\alpha)$. 
\end{proposition}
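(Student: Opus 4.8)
The plan is to observe that the polynomially-tilted form of $p_{\alpha,\theta}$ is simply the defining relation~(\ref{eq:ML2par}), so the actual work splits into two steps: first establish the convolution representation~(\ref{ML1parconv_density}) for $p_\alpha$, and then specialise Lemma~\ref{lem:gammaLinnik_conv} to recover both the stated density of $P_{\alpha,\theta}$ and its Laplace transform.

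For the convolution representation of $p_\alpha$, I would set $z=t$ and $u=1$ in the known identity~(\ref{eq:stableLTderivdensity}), which gives $\{\rho_{1-\alpha}\star f_\alpha(\cdot\vert t)\}(1)=\frac{1}{\alpha t}\,f_\alpha(1\vert t)$. I would then use the stable scaling property $f_\alpha(u\vert z)=f_\alpha(u z^{-1/\alpha})\,z^{-1/\alpha}$ from Section~\ref{sec:stable}, evaluated at $u=1$, $z=t$, to rewrite $f_\alpha(1\vert t)=f_\alpha(t^{-1/\alpha})\,t^{-1/\alpha}$, so that $\frac{1}{\alpha t}\,f_\alpha(1\vert t)=\frac{1}{\alpha}\,f_\alpha(t^{-1/\alpha})\,t^{-1/\alpha-1}$, which is exactly the density $p_\alpha(t)$ defined in Section~\ref{sec:ML}. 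This yields the chain of equalities in~(\ref{ML1parconv_density}).

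Next I would invoke Lemma~\ref{lem:gammaLinnik_conv} with $\gamma=1+\theta/\alpha$ and $\beta=1+\theta$. Under this choice the hypotheses hold precisely when $\theta>-\alpha$: indeed $\beta-\alpha\gamma=1-\alpha\ge0$ automatically since $0<\alpha<1$, while $\gamma>0$ is equivalent to $\theta>-\alpha$. Moreover $\rho_\gamma(t)=t^{\theta/\alpha}/\Gamma(1+\theta/\alpha)$, so equation~(\ref{eq:gammaLinnik_conv_LT}) asserts that $\Gamma(1+\theta)\,\{\rho_{1-\alpha}\star f_\alpha(\cdot\vert t)\}(1)\,t^{\theta/\alpha}/\Gamma(1+\theta/\alpha)$ is a probability density in $t$ with Laplace transform $\Gamma(1+\theta)\,E^{1+\theta/\alpha}_{\alpha,1+\theta}(-\lambda)$. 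By the previous step this density equals $\frac{\Gamma(1+\theta)}{\Gamma(1+\theta/\alpha)}\,t^{\theta/\alpha}\,p_\alpha(t)$; comparing its Laplace transform with~(\ref{eq:ML2parLT}) identifies it with the law ${\rm ML}(\alpha,\theta)$, and uniqueness of Laplace transforms completes the argument. (Setting $\theta=0$ recovers $p_\alpha$ with Laplace transform $E_\alpha(-x)$, consistent with~(\ref{eq:ML1parLT}).)

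There is no genuine obstacle here, since the substantive content is already packaged in Lemma~\ref{lem:gammaLinnik_conv} and in the classical identity~(\ref{eq:stableLTderivdensity}); the only points requiring care are the parameter bookkeeping---choosing $\gamma=1+\theta/\alpha$, $\beta=1+\theta$ so that $\beta-\alpha\gamma$ collapses to $1-\alpha$---and checking that the admissible range $\theta>-\alpha$ is exactly the condition $\gamma>0$ needed for $\rho_\gamma$ to be a genuine power density, the normalisation being supplied by $E^{1+\theta/\alpha}_{\alpha,1+\theta}(0)=1/\Gamma(1+\theta)$ as noted in the Lemma.
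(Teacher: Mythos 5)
Your proposal is correct and follows essentially the same route as the paper's own proof: specialise Lemma~\ref{lem:gammaLinnik_conv} with $\beta=1+\theta$, $\gamma=1+\theta/\alpha$ (so $\beta-\alpha\gamma=1-\alpha$), and identify $\{\rho_{1-\alpha}\star f_\alpha(\cdot\vert t)\}(1)$ with $p_\alpha(t)$ via~(\ref{eq:stableLTderivdensity}) and the stable scaling property. The only difference is cosmetic ordering---you establish the $p_\alpha$ identity before invoking the lemma rather than after---and your bookkeeping of the condition $\theta>-\alpha$ matches the paper.
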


\begin{proof}[Proof of Proposition~$\ref{prop:ML_laws}$]
\label{proof:ML_laws}
%We prove a more general case which gives Proposition~$\ref{prop:ML_laws}$ as a special case.
%We return to the general case in Section~\ref{sec:generalisation}. 
Let $\beta=1+\theta$, $\gamma=1+\theta/\alpha>0$  %in~(\ref{eq:gammaLinnik_conv_1}) 
$\implies$  $\beta-\alpha\gamma=1-\alpha$.
Then~(\ref{eq:gammaLinnik_conv_LT}) becomes
\begin{align*}
\int_0^\infty e^{-x t} \, p_{\alpha,\theta}(t) \, dt 
%&= \frac{1}{\lambda^{\gamma}} \,  \{\rho_\nu \star \ell_\alpha(\cdot \vert \gamma, \lambda)\}(1)
&= \Gamma(1+\theta) E^{1+\theta/\alpha}_{\alpha,1+\theta}(-x)  \qquad (x\ge0)
% \int_0^\infty \{\rho_{\beta-\alpha\gamma}\star f_\alpha(\cdot\vert  t)\}(1)\,  \rho_\gamma(t)  e^{-zt} \, dt &= E^\gamma_{\alpha,\beta}(-z) 
\end{align*}
for the  density $p_{\alpha,\theta}(t)$ of the probability distribution ${\rm ML}(\alpha,\theta)$
\begin{align*}
p_{\alpha,\theta}(t) 
&= \Gamma(1+\theta) \{\rho_{1-\alpha} \star f_\alpha(\cdot\vert  t)\}(1) \,  \rho_{1+\theta/\alpha}(t) \\
&\equiv \frac{\Gamma(1+\theta)}{\Gamma(1+\theta/\alpha)}   \,  t^{\theta/\alpha} \, p_\alpha(t) \qquad (\theta>-\alpha)
%\label{eq:rho_Linnik_convdensityLT1a} 
\end{align*}
$p_\alpha(t) =  \{\rho_{1-\alpha}\star f_\alpha(\cdot\vert t) \}(1)$.  
% = \frac{1}{\alpha t} \, f_\alpha(1\vert t) \equiv \frac{1}{\alpha} \,  f_\alpha(t^{-1/\alpha}) \, t^{-1/\alpha-1} \nonumber
We   also  recall  the  identity $\{\rho_{1-\alpha}\star f_\alpha(\cdot\vert  t)\}(1)\equiv f_\alpha(1\vert t)/\alpha t$  
of~(\ref{eq:stableLTderivdensity}) 
and $f_\alpha(1\vert t) \equiv f_\alpha(t^{-1/\alpha}) t^{-1/\alpha}$.
\end{proof}

Another representation that readily follows from convolution is the following 
\begin{proposition}
\label{prop:ML_beta}
The densities  of ${\rm ML}(\alpha,\theta)$  and ${\rm ML}(\alpha,\theta+1)$   are related by
\begin{align}
p_{\alpha,\theta}(t) 
&=  \int_0^1 p_{\alpha,\theta+1}(t/u) \, {\rm beta}\left(u \left.\right\vert \tfrac{\theta}{\alpha}+1,\tfrac{1}{\alpha}-1\right) \,\frac{du}{u} 
\label{eq:ML2parconvint1}
\end{align}
Let  $M_{\alpha,\theta} \sim {\rm ML}(\alpha,\theta)$ and $B_{\theta/\alpha+1,1/\alpha-1} \sim {\rm Beta}(\theta/\alpha+1,1/\alpha-1)$.
Then~$(\ref{eq:ML2parconvint1})$ implies
\begin{align}
M_{\alpha,\theta} &\overset{d}{=} M_{\alpha,\theta+1} \times  B_{\theta/\alpha+1,1/\alpha-1}
\label{eq:ML_beta_product}
\end{align}
where $M_{\alpha,\theta+1}$ and  $B_{\theta/\alpha+1,1/\alpha-1}$ are independent.
\end{proposition}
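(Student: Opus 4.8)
The plan is to prove the integral identity~(\ref{eq:ML2parconvint1}) directly from the explicit density~(\ref{eq:ML2par}), and then read off the distributional statement~(\ref{eq:ML_beta_product}) using the product-density formula~(\ref{eq:productdensity}) established in the Preliminaries.

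First I would start from the right-hand side of~(\ref{eq:ML2parconvint1}). Substituting the closed form $p_{\alpha,\theta+1}(t/u) = \frac{\Gamma(2+\theta)}{\Gamma(1+(\theta+1)/\alpha)}\,(t/u)^{(\theta+1)/\alpha}\,p_\alpha(t/u)$ and the beta density ${\rm beta}(u\mid \theta/\alpha+1,1/\alpha-1) = \frac{\Gamma(1+\theta/\alpha+1/\alpha-1)}{\Gamma(\theta/\alpha+1)\Gamma(1/\alpha-1)}\,u^{\theta/\alpha}(1-u)^{1/\alpha-2}$, the integrand becomes a constant times $t^{(\theta+1)/\alpha}\,p_\alpha(t/u)\,u^{-(\theta+1)/\alpha}\cdot u^{\theta/\alpha}(1-u)^{1/\alpha-2}\,u^{-1}$. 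The key computational step is then to insert the scaling form $p_\alpha(t/u) = \frac1\alpha f_\alpha\!\big((t/u)^{-1/\alpha}\big)(t/u)^{-1/\alpha-1}$ from~(\ref{ML1parconv_density}); better still, I would use $p_\alpha(s) = \{\rho_{1-\alpha}\star f_\alpha(\cdot\mid s)\}(1)$ together with the convolution representation~(\ref{eq:intro_power_stable_conv_betadensity}) of Theorem~\ref{thm:rho_stable_convolution}, which already contains exactly a $\int_0^1 f_\alpha(1\mid s/u)u^{-(1-\alpha)/\alpha}(1-u)^{(1-\alpha)/\alpha-1}\,du/u$ structure. Matching the exponents $(1-\alpha)/\alpha = 1/\alpha-1$ shows that the beta weight appearing in Theorem~\ref{thm:rho_stable_convolution} with $\nu=1-\alpha$ is precisely ${\rm beta}(\cdot\mid \cdot,1/\alpha-1)$ up to the polynomial tilt, so the $u$-integral on the right of~(\ref{eq:ML2parconvint1}) collapses to $p_\alpha(t)$ (equivalently $\{\rho_{1-\alpha}\star f_\alpha(\cdot\mid t)\}(1)$) times the explicit $t^{\theta/\alpha}$ tilt, which is exactly $p_{\alpha,\theta}(t)$ after the Gamma constants telescope. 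An equivalent and perhaps cleaner route is a pure Laplace-transform argument: show both sides of~(\ref{eq:ML2parconvint1}) have Laplace transform $\Gamma(1+\theta)E^{1+\theta/\alpha}_{\alpha,1+\theta}(-x)$, using~(\ref{eq:ML2parLT}) on the left and, on the right, the fact that multiplying an ${\rm ML}(\alpha,\theta+1)$ variable by an independent beta variable corresponds on the moment side to multiplying moments, combined with~(\ref{eq:ML2parMoment}) and a beta-moment identity that makes the ratio telescope to the moments of ${\rm ML}(\alpha,\theta)$.

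Having established~(\ref{eq:ML2parconvint1}), the distributional consequence~(\ref{eq:ML_beta_product}) is immediate: formula~(\ref{eq:productdensity}) says that if $U\sim F$ with density $f$ and $B_{a,b}\sim{\rm Beta}(a,b)$ are independent, then $UB_{a,b}$ has density $t\mapsto\int_0^1 f(t/u)\,{\rm beta}(u\mid a,b)\,du/u$. Taking $f=p_{\alpha,\theta+1}$, $a=\theta/\alpha+1$, $b=1/\alpha-1$, the right-hand side of~(\ref{eq:ML2parconvint1}) is by definition the density of $M_{\alpha,\theta+1}\times B_{\theta/\alpha+1,1/\alpha-1}$, and since that density equals $p_{\alpha,\theta}(t)$ we conclude $M_{\alpha,\theta}\overset{d}{=}M_{\alpha,\theta+1}\times B_{\theta/\alpha+1,1/\alpha-1}$.

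The main obstacle is bookkeeping rather than anything conceptual: one must check that all the Gamma-function prefactors — $\Gamma(2+\theta)/\Gamma(1+(\theta+1)/\alpha)$ from $p_{\alpha,\theta+1}$, the beta normaliser $\Gamma(\theta/\alpha+1/\alpha)/[\Gamma(\theta/\alpha+1)\Gamma(1/\alpha-1)]$, the $1/\Gamma(1-\alpha)$ hidden inside the convolution form of $p_\alpha$, and the target prefactor $\Gamma(1+\theta)/\Gamma(1+\theta/\alpha)$ — combine correctly, and that the beta parameters are legitimate, i.e.\ $1/\alpha-1>0$ (true since $0<\alpha<1$) and $\theta/\alpha+1>0$ (true since $\theta>-\alpha$). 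One should also verify the exponent matching $(\theta+1)/\alpha - \theta/\alpha = 1/\alpha$ so that the leftover power of $t$ is $t^{\theta/\alpha}$ and the leftover power of $u$ inside the integral is $u^{-1/\alpha}$, aligning with the $\nu=1-\alpha$ instance of~(\ref{eq:intro_power_stable_conv_betadensity}). Once these are confirmed the identity falls out, and the Laplace-transform alternative is available as an independent sanity check via~(\ref{eq:ML2parLT}).
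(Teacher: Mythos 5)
Your proposal is correct and follows essentially the same route as the paper: write $p_{\alpha,\theta}$ via $p_\alpha(t)=\{\rho_{1-\alpha}\star f_\alpha(\cdot\vert t)\}(1)$, apply the beta-integral form of Theorem~\ref{thm:rho_stable_convolution} with $\nu=1-\alpha$, check that the Gamma constants telescope so the integrand is $p_{\alpha,\theta+1}(t/u)\,{\rm beta}(u\vert\theta/\alpha+1,1/\alpha-1)/u$, and then invoke~(\ref{eq:productdensity}) for the distributional identity. The only differences are cosmetic (you run the computation from right to left and mention a Laplace-transform/moment check as an optional alternative), so the argument matches the paper's proof.
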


\begin{proof}[Proof of Proposition~$\ref{prop:ML_beta}$]
\label{proof:ML_beta}
Theorem~\ref{thm:rho_stable_convolution} for  $\{\rho_{1-\alpha}\star f_\alpha(\cdot\vert t) \}(1)$ leads to
\begin{align*} 
p_{\alpha,\theta}(t) 
&= \frac{\Gamma(1+\theta)}{\Gamma(1+\theta/\alpha)} \,  t^{\theta/\alpha} \, \{\rho_{1-\alpha}\star f_\alpha(\cdot\vert t) \}(1) \\
&=  \frac{\Gamma(1+\theta)}{\Gamma(1+\theta/\alpha)\Gamma(1/\alpha-1)} 
 \int_0^1 f_\alpha(1 \vert t/u) \, (t/u)^{(\theta+1)/\alpha-1} \, u^{\theta/\alpha} (1-u)^{1/\alpha-2}  \, \frac{du}{u} 
%\label{eq:ML2parconvint1} 
\\
%&=  \frac{1}{\Gamma(1/\alpha)} \int_0^1 p_\alpha(1 \vert t/v) \, (t/v)^{1/\alpha}  (1-v)^{1/\alpha-2}  \, \frac{dv}{v} 
%\label{ML1parconvint2} \\
&=  \int_0^1 p_{\alpha,\theta+1}(t/u) \, {\rm beta}\left(u \left.\right\vert \tfrac{\theta}{\alpha}+1,\tfrac{1}{\alpha}-1\right) \,\frac{du}{u} 
%\label{eq:ML2parconvint3} 
\end{align*}
By~(\ref{eq:productdensity}), this integral  is the density of a product %$M_{\alpha,\theta+1} B_{\theta/\alpha+1,1/\alpha-1}$
of  independent random variables $M_{\alpha,\theta+1}$ and  $B_{\theta/\alpha+1,1/\alpha-1}$, 
{\it i.e.}\ $M_{\alpha,\theta} \overset{d}{=}  M_{\alpha,\theta+1}  B_{\theta/\alpha+1,1/\alpha-1}$
for random variables as defined above.
\end{proof}
The identity~(\ref{eq:ML_beta_product}) is well-known, the approach here rests on the convolution $\{\rho_{1-\alpha}\star f_\alpha(\cdot\vert t) \}(1)$. 

\subsection{Mittag-Leffler Markov Chain}
\label{sec:MLMC}

%Proposition~$\ref{prop:ML_beta}$ leads to a compact derivation of the 
The Mittag-Leffler Markov chain  underpins the study of stable trees and graphs 
(Goldschmidt~{\it  et al.}~\cite{Goldschmidt22}, Goldschmidt and Haas~\cite{GoldschmidtHaas},  Haas~{\it  et al.}~\cite{Haas}, James~\cite{James2015}, Rembart and Winkel~\cite{RembartWinkel2018, RembartWinkel2023}, S\'{e}nizergues~\cite{Senizergues}). 
The following lemma states a  well-known result, but derivations  in the literature tend to be more elaborate
(however, see James~\cite[Proposition~3.1]{James2015} for a compact proof,  although the overall 
 discussion  rests on ${\rm PD}(\alpha,\theta)$).
\begin{lemma}
\label{lem:MLMC}
For  $M_{\alpha,\theta}\sim{\rm ML}(\alpha,\theta)$, $\{M_{\alpha,\theta+k}, k\ge0\}$ is a time-homogeneous  Markov chain 
with the  $k\to k+1$ transition probability given by the conditional density
\begin{align}
\Pr(M_{\alpha,\theta+k+1}\equiv u \vert  M_{\alpha,\theta+k}\equiv t)
&=  \frac{\alpha u}{\Gamma\left(\tfrac{1}{\alpha}-1\right)} \frac{p_\alpha(u)}{p_\alpha(t)} (u-t)^{1/\alpha-2} \quad (u>t)
\label{eq:MLMC}
\end{align}
\end{lemma}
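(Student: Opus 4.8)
The plan is to derive the transition density from the product decomposition already established in Proposition~\ref{prop:ML_beta}, and then to verify time-homogeneity directly from the explicit form. First I would record that~(\ref{eq:ML_beta_product}) applied with $\theta$ replaced by $\theta+k$ gives $M_{\alpha,\theta+k} \overset{d}{=} M_{\alpha,\theta+k+1} \times B_{(\theta+k)/\alpha+1,\,1/\alpha-1}$ with the two factors independent. This is a multiplicative Markov-type relation: $M_{\alpha,\theta+k}$ is a function of $M_{\alpha,\theta+k+1}$ and a beta variable independent of everything with smaller $\theta$-index. To turn the \emph{backward} relation into a \emph{forward} transition density $\Pr(M_{\alpha,\theta+k+1}\equiv u \mid M_{\alpha,\theta+k}\equiv t)$, I would use Bayes' rule: the joint density of $(M_{\alpha,\theta+k}\equiv t,\, M_{\alpha,\theta+k+1}\equiv u)$ is, by~(\ref{eq:jointdensity}) read in the other direction,
\begin{align*}
\Pr(t,u) = p_{\alpha,\theta+k+1}(u)\cdot \frac{1}{u}\,{\rm beta}\!\left(\tfrac{t}{u}\,\Big\vert\, \tfrac{\theta+k}{\alpha}+1,\tfrac{1}{\alpha}-1\right) \qquad (u>t),
\end{align*}
and dividing by the marginal $p_{\alpha,\theta+k}(t)$ gives the conditional density of $u$ given $t$.

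Next I would substitute the explicit beta density and the polynomial-tilting identity $p_{\alpha,\theta+k+1}(u) = \frac{\Gamma(2+\theta+k)}{\Gamma(1+(\theta+k+1)/\alpha)} u^{(\theta+k+1)/\alpha} p_\alpha(u)$ and likewise for $p_{\alpha,\theta+k}(t)$ from~(\ref{eq:ML2par}). In the ratio $p_{\alpha,\theta+k+1}(u)/p_{\alpha,\theta+k}(t)$, the factors $t^{(\theta+k)/\alpha}$ and $u^{(\theta+k+1)/\alpha}$ combine with the $(t/u)$-power inside ${\rm beta}(t/u\mid (\theta+k)/\alpha+1,1/\alpha-1)$, namely $(t/u)^{(\theta+k)/\alpha}(1-t/u)^{1/\alpha-2}$, and with the $1/u$ prefactor. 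The exponent of $t$ should cancel completely and the exponent of $u$ should collapse to $u^{1}$; the $\Gamma$-prefactors reduce via $\Gamma(2+\theta+k) = (1+\theta+k)\Gamma(1+\theta+k)$ and $\Gamma(1+(\theta+k+1)/\alpha) = \frac{\theta+k+1}{\alpha}\,\Gamma((\theta+k)/\alpha+1)$ — except that the beta normalisation $\frac{\Gamma((\theta+k)/\alpha+1/\alpha)}{\Gamma((\theta+k)/\alpha+1)\Gamma(1/\alpha-1)}$ must be the piece that, together with these, leaves only $\frac{\alpha}{\Gamma(1/\alpha-1)}$. I would carry out this bookkeeping carefully; the one nontrivial cancellation is that $(\theta+k)/\alpha + 1/\alpha = (\theta+k+1)/\alpha$, so $\Gamma((\theta+k+1)/\alpha)$ from the beta normaliser meets $\Gamma(1+(\theta+k+1)/\alpha)$ from $p_{\alpha,\theta+k+1}$, producing the factor $(\theta+k+1)/\alpha$ that cancels the same factor from $\Gamma(2+\theta+k)$ expanded the other way; what survives is exactly $\alpha$ in the numerator. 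The $(1-t/u)^{1/\alpha-2}$ becomes $u^{-(1/\alpha-2)}(u-t)^{1/\alpha-2}$, and the leftover powers of $u$ assemble to $u^{1}$, yielding
\begin{align*}
\Pr(M_{\alpha,\theta+k+1}\equiv u \vert M_{\alpha,\theta+k}\equiv t) = \frac{\alpha u}{\Gamma(1/\alpha-1)}\,\frac{p_\alpha(u)}{p_\alpha(t)}\,(u-t)^{1/\alpha-2} \qquad (u>t),
\end{align*}
which is~(\ref{eq:MLMC}), manifestly independent of both $k$ and $\theta$ — this is the time-homogeneity claim. The Markov property itself follows because, unwinding~(\ref{eq:ML_beta_product}) iteratively, $M_{\alpha,\theta+k}$ depends on $M_{\alpha,\theta+k+1}$ only through an independent beta factor, so conditioning on the whole past $\{M_{\alpha,\theta+j}: j\le k\}$ is the same as conditioning on $M_{\alpha,\theta+k}$ alone; alternatively one observes that the chain $\{M_{\alpha,\theta+k}\}$ run backward is a pure multiplicative (``fragmentation'') chain and hence Markov, and the conditional density just computed is a regular conditional distribution, so the forward process is Markov with the stated kernel.

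The main obstacle I expect is not conceptual but the gamma-function and power-counting bookkeeping in the ratio: one has to track the exponents of $t$ (which must vanish), the exponents of $u$ (which must sum to $+1$), and four gamma factors whose product must reduce to $\alpha/\Gamma(1/\alpha-1)$, all while the parameters $\theta$ and $k$ appear only through the combination $\theta+k$ and must drop out entirely. The clean way to keep this under control is to write everything in terms of the single parameter $\vartheta := \theta+k$, verify the cancellation once, and then note that the absence of $\vartheta$ from the final expression \emph{is} the assertion of time-homogeneity; I would also double-check the edge constraint $1/\alpha - 1 > 0$ (i.e. $\alpha<1$) and $\vartheta/\alpha + 1 > 0$ so that the beta density in Proposition~\ref{prop:ML_beta} is legitimate, which holds since $\theta>-\alpha$ forces $\vartheta = \theta+k \ge \theta > -\alpha$.
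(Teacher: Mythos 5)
Your proposal is correct and takes essentially the same route as the paper: it forms the joint density of consecutive states from the beta-product decomposition of Proposition~\ref{prop:ML_beta} via~(\ref{eq:jointdensity}), divides by the marginal $p_{\alpha,\theta+k}(t)$, and verifies that the powers of $t$ and $u$ and the four gamma factors collapse to the $k$- and $\theta$-free kernel $\frac{\alpha u}{\Gamma(1/\alpha-1)}\frac{p_\alpha(u)}{p_\alpha(t)}(u-t)^{1/\alpha-2}$. (One minor slip: the intermediate identity $\Gamma\bigl(1+(\theta+k+1)/\alpha\bigr)=\frac{\theta+k+1}{\alpha}\,\Gamma\bigl((\theta+k)/\alpha+1\bigr)$ should have $\Gamma\bigl((\theta+k+1)/\alpha\bigr)$ on the right, which is what your subsequent sentence actually uses, so the cancellation to $\alpha/\Gamma(1/\alpha-1)$ goes through as you describe.)
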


\begin{proof}[Proof of Lemma~$\ref{lem:MLMC}$]
\label{proof:MLMC}
%For $\theta\to\theta+k-1$ ($k>0$),
Set $\Pr(M_{\alpha,\theta+k} \equiv t) = p_{\alpha,\theta+k}(t)$ and $\Pr(M_{\alpha,\theta+k+1} \equiv u)= p_{\alpha,\theta+k+1}(u)$.
By~(\ref{eq:jointdensity}), the joint density  $\Pr(t,u)=\Pr(t\vert u)\Pr(u)= \Pr(u \vert t)  \Pr(t)$ is 
\begin{align*} 
\Pr(t,u) &=  \Pr(t\vert u)\times\Pr(u) 
  = \frac{1}{u} \, {\rm beta}\left(\tfrac{t}{u} \left.\right\vert \tfrac{\theta+k}{\alpha}+1, \tfrac{1}{\alpha}-1\right)\times
p_{\alpha,\theta+k+1}(u)
% \label{eq:MLjointdensity} 
\end{align*}
Hence the $k\to k+1$ transition probability  $\Pr(M_{\alpha,\theta+k+1}\equiv u \vert  M_{\alpha,\theta+k}\equiv t)$ is
%With the aid of  the product rule $\Pr(t, u)=\Pr(t\vert u)\Pr(u) = \Pr(u \vert t)  \Pr(t)$, we conclude that
%given by the ratio  of~(\ref{eq:jointdensity}) and~(\ref{eq:productdensity}), with $f(u)\equiv p_{\alpha,\theta+k}(u)$.
\begin{align*} 
\Pr(u \vert t)  
= \frac{\Pr(t, u)}{\Pr(t)}%\nonmember \\
&= \frac{p_{\alpha,\theta+k+1}(u)}{p_{\alpha,\theta+k}(t)}  \frac{1}{u} \,
{\rm beta}\left(\tfrac{t}{u} \left.\right\vert \tfrac{\theta+k}{\alpha}+1, \tfrac{1}{\alpha}-1\right) \\
&= \frac{\alpha  u^{1/\alpha-1}}{\Gamma\left(\tfrac{1}{\alpha}-1\right)} \frac{p_\alpha(u)}{p_\alpha(t)} \left(1-\frac{t}{u}\right)^{1/\alpha-2} 
\quad (u>t)
\end{align*}
which is independent of $k$ (and $\theta$), {\it  i.e.}\ $\{M_{\alpha,\theta+k}, k\ge0\}$ is a time-homogeneous Markov chain.
%Hence the  $k-1\to k$ transition probability $\Pr(M_{\alpha,\theta+k} \vert  M_{\alpha,\theta+k-1})$ is  independent of $\theta$ and $k$
%so that $\{M_{\alpha,\theta+k}, k\ge0\}$ is the time-homogeneous Mittag-Leffler Markov chain.
\end{proof}

\section{A New Family of Laws}
\label{sec:new_family}
 
% \subsection{Mittag-Leffler Mixture Densities}
% \label{sec:MLmixture}
 
 \begin{proposition}
\label{prop:ML_rvproduct}
Consider two  independent  positive random variables
\begin{itemize}
\setlength{\itemsep}{0pt}
\item $S_{\sigma;z} \sim F_\sigma(z>0)$ $(0<\sigma<1)$,  where $F_\sigma(z)$ is  the  
$\sigma$-stable distribution, density $f_\sigma(\cdot\vert z)$ 
\item $M_{\alpha,\theta} \sim P_{\alpha,\theta} \equiv {\rm ML}(\alpha,\theta)$ $(0<\alpha<1, \theta>-\alpha)$,  
the   Mittag-Leffler distribution %with density  $p_{\alpha,\theta}$
\end{itemize}
Then the following holds
\begin{enumerate}
\setlength{\itemsep}{0pt}
\item The product $T_{\alpha,\theta,\sigma,z}=S_{\sigma;z} \, M_{\alpha,\theta}^{1/\sigma}$  has a distribution $H_{\alpha,\theta; \sigma}(z)$ with mixture density
\begin{align}
h_{\alpha,\theta; \sigma}(t \vert z)
&=  \int_0^\infty  f_\sigma(t \vert z u) \, dP_{\alpha,\theta}(u)
\label{eq:ML_rvproduct_density}
\end{align}
%Equivalently, $S_{\sigma;z} \, M_{\alpha,\theta}^{1/\sigma} \overset{d}{=} T_{\alpha,\theta,\sigma;z}  \sim H_{\alpha,\theta; \sigma}(z)$. \\
\item The  Laplace transform %$\widetilde h_{\alpha,\theta; \sigma}(x \vert z)$  $(x\ge0)$ 
of~$(\ref{eq:ML_rvproduct_density})$ is 
\begin{align}
 \mathbb{E}\left[e^{-xT_{\alpha,\theta,\sigma,z}}\right]  &=  \mathbb{E}\left[e^{-zx^\sigma M_{\alpha,\theta}}\right] 
%\widetilde h_{\alpha,\theta; \sigma}(x \vert z)
 = \Gamma(1+\theta)E^{1+\theta/\alpha}_{\alpha,1+\theta}(-z x^\sigma)   %\qquad (x\ge0)
\label{eq:ML_densityLT} 
\end{align}
% &= \Gamma(\theta)E^{\theta/\alpha}_{\alpha,\theta}(-z x^\sigma)  &&(\theta>0)
%\label{eq:ML_densityLT1}
\item $h_{\alpha,\theta; \sigma}(t \vert z)$  may  be expressed as the infinite series
\begin{align}
h_{\alpha,\theta; \sigma}(t \vert z)
&= -  \frac{1}{\pi}\,   \sum_{k=1}^\infty  \frac{(-z)^k}{k!} \sin(\pi\sigma k) \; \frac{\Gamma(\sigma k+1)}{t^{\sigma k+1}}  \; \mu_{\alpha,\theta;k}
\label{eq:ML_rvproduct_density_iinfseries}
\end{align}
where $\mu_{\alpha,\theta;k}$ is the $k^{\rm th}$ moment~$(\ref{eq:ML2parMoment})$ of the Mittag-Leffler distribution ${\rm ML}(\alpha,\theta)$.
\end{enumerate}
  %which may also be written as 
%For $\theta>0$, $\Gamma(1+\theta)E^{1+\theta/\alpha}_{\alpha,1+\theta}(\cdot)\to\Gamma(\theta)E^{\theta/\alpha}_{\alpha,\theta}(\cdot)$.
\end{proposition}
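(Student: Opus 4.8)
The plan is to obtain all three parts as direct consequences of Theorem~\ref{thm:gen_rvproduct}, Corollary~\ref{cor:hPollardinfsum}, and the known Laplace transform~(\ref{eq:ML2parLT}) of the Mittag-Leffler law; the only bookkeeping is the relabelling of the stable index as $\sigma$ and the choice $U\sim P_{\alpha,\theta}$.

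First, for part~1 I would set $F\equiv P_{\alpha,\theta}={\rm ML}(\alpha,\theta)$ and take the stable factor to have index $\sigma$ in Theorem~\ref{thm:gen_rvproduct}. The theorem then applies verbatim (the role of its generic stable index is played by the present $\sigma$, and $0<\sigma<1$ is assumed), so the product $T_{\alpha,\theta,\sigma,z}=S_{\sigma;z}\,M_{\alpha,\theta}^{1/\sigma}$ has density $h_{\alpha,\theta;\sigma}(t\vert z)=\int_0^\infty f_\sigma(t\vert zu)\,dP_{\alpha,\theta}(u)$, which is exactly~(\ref{eq:ML_rvproduct_density}).

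Next, for part~2, Theorem~\ref{thm:gen_rvproduct} simultaneously gives $\mathbb{E}[e^{-xT_{\alpha,\theta,\sigma,z}}]=\mathbb{E}[e^{-zx^\sigma M_{\alpha,\theta}}]$ for $x\ge0$. I then evaluate the right-hand side by specialising the Mittag-Leffler Laplace transform~(\ref{eq:ML2parLT}): since $\int_0^\infty e^{-yt}p_{\alpha,\theta}(t)\,dt=\Gamma(1+\theta)E^{1+\theta/\alpha}_{\alpha,1+\theta}(-y)$ holds for all $y\ge0$, and $y=zx^\sigma\ge0$ is an admissible argument, the substitution $y=zx^\sigma$ yields $\mathbb{E}[e^{-zx^\sigma M_{\alpha,\theta}}]=\Gamma(1+\theta)E^{1+\theta/\alpha}_{\alpha,1+\theta}(-zx^\sigma)$, which is~(\ref{eq:ML_densityLT}).

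Finally, for part~3, I would invoke Corollary~\ref{cor:hPollardinfsum}, whose hypothesis is that the mixing law possess finite moments of every order. For $F={\rm ML}(\alpha,\theta)$ this holds with $\mu_k=\mu_{\alpha,\theta;k}$ given explicitly in~(\ref{eq:ML2parMoment}); substituting these moments and the stable index $\sigma$ into~(\ref{eq:hPollardinfsum}) produces~(\ref{eq:ML_rvproduct_density_iinfseries}). Since each step is merely a substitution into an already-proved statement, there is no genuine obstacle here; the only point deserving a word of care is the verification that ${\rm ML}(\alpha,\theta)$ meets the moment hypothesis of Corollary~\ref{cor:hPollardinfsum} — which is precisely what~(\ref{eq:ML2parMoment}) records — after which convergence of the displayed series is inherited from the corollary.
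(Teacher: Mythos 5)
Your proposal is correct and follows essentially the same route as the paper: specialise Theorem~\ref{thm:gen_rvproduct} with $U\sim{\rm ML}(\alpha,\theta)$ and stable index $\sigma$ for parts~1 and~2, then invoke Corollary~\ref{cor:hPollardinfsum} with the moments~(\ref{eq:ML2parMoment}) for part~3. Your explicit substitution $y=zx^\sigma$ into~(\ref{eq:ML2parLT}) merely spells out a step the paper leaves implicit.
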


\begin{proof} [Proof of Proposition~$\ref{prop:ML_rvproduct}$]
\label{proof:ML_rvproduct}
Set  $U \equiv M_{\alpha,\theta} \sim F\equiv {\rm ML}(\alpha,\theta)$ in Theorem~\ref{thm:gen_rvproduct} 
to prove~(\ref{eq:ML_rvproduct_density}) and~(\ref{eq:ML_densityLT}).
Since ${\rm ML}(\alpha,\theta)$ has moments $\{\mu_{\alpha,\theta;k}\}$ of all order,  
 Corollary~\ref{cor:hPollardinfsum} gives~(\ref{eq:ML_rvproduct_density_iinfseries}).
\end{proof}
% It is  straightforward to show that the Laplace transform of~(\ref{eq:ML_rvproduct_densitty}) is %$h_{\alpha,\theta\vert \sigma}(t \vert \lambda)$ is
%\begin{align}
%\Gamma(1+\theta)\, E^{1+\theta/\alpha}_{\alpha,1+\theta}(-z x^\sigma)  &= \int_0^\infty e^{-x t} \, h_{\alpha,\theta;\sigma}(t \vert z) \, dt \qquad (x\ge0)
%\label{eq:hdensityLT}
%\end{align}
%Hence $h_{\alpha,\theta; \sigma}(t \vert z)$  is a family of densities whose Laplace transforms are Mittag-Leffler functions,
% with the argument $x$ in the Laplace transform~(\ref{eq:ML2parLT}) of $p_{\alpha,\theta}$ replaced by  $z x^\sigma$.
%(composition with other completely monotone functions   is possible but will not be pursued here).
%if we use the convention $f_{\sigma=1}(t\vert u) = \delta(t - u)$ to obtain
%\begin{align}
%h_{\alpha,\theta; \sigma=1}(t) &=  \int_0^\infty  \delta(t - u) \, p_{\alpha,\theta}(u) \, du = p_{\alpha,\theta}(t)
%\end{align}

%The  main point is that (in addition to $\{\alpha, \theta\}$)  ${\rm ML}(\alpha,\theta;\sigma  \vert \lambda)$ is indexed by $\sigma\in(0,1]$.
%The case ${\rm ML}(\alpha,\theta;\alpha \vert z)$  is well-studied,  but it is treated as an isolated case in the literature 
 %rather than as a particular instance $\sigma=\alpha$ of a $\sigma$-indexed family of distributions. 
 
Particular instances of the densities $h_{\alpha,\theta; \sigma}(t \vert z)$    have been comprehensively  studied 
under various guises in the probabilistic literature, as discussed next.

\subsection{Lamperti-type Laws}
\label{sec:Lamperti-type}

James~\cite{James_Lamperti}  studied  the variables $X_{\alpha,\theta} \overset{d}{=} S_{\alpha}/S_{\alpha,\theta}$
where $S_{\alpha,\theta}$ has a distribution with density
\begin{align*}
 \frac{\Gamma(1+\theta)}{\Gamma(1+\theta/\alpha)} \,  t^{-\theta} f_\alpha(t) \qquad (\theta>-\alpha)
 \end{align*}
and $S_\alpha \equiv S_{\alpha,\theta=0}$ is the $\alpha$-stable variable ($S_{\alpha;z=1}$ in the notation of this paper).
It can readily be shown that  $S_{\alpha,\theta}  \overset{d}{=} M_{\alpha,\theta}^{-1/\alpha}$ so that,
if $T_{\alpha,\theta,\alpha,z} \sim H_{\alpha,\theta; \alpha}(z)$
%$X_{\alpha,\theta}$ may be expressed as
\begin{align}
 X_{\alpha,\theta}  &\overset{d}{=} \frac{S_{\alpha}}{S_{\alpha,\theta}} \overset{d}{=} S_{\alpha;1} M_{\alpha,\theta}^{1/\alpha}
\overset{d}{=} T_{\alpha,\theta,\alpha,1} %\sim H_{\alpha,\theta; \alpha}(1)
\label{eq:Lamperti_type_rv}
\end{align}
Hence the $X_{\alpha,\theta}$ variables, described by James  as  having  ``Lamperti-type laws'', arise in our context as the $\sigma=\alpha$ instance of 
Proposition~\ref{prop:ML_rvproduct} with explicit densities $h_{\alpha,\theta; \alpha}(\cdot \vert 1)$.
For general $z>0$, we can define $X_{\alpha,\theta;z} \overset{d}{=} S_{\alpha;z} M_{\alpha,\theta}^{1/\alpha}
\overset{d}{=} T_{\alpha,\theta,\alpha,z}$ %\sim H_{\alpha,\theta; \alpha}(z)$, 
with density $h_{\alpha,\theta; \alpha}(\cdot \vert z)$.

The Lamperti laws defined by $\{\sigma=\alpha, \theta=0\}$ are known to have simple densities.
%\subsubsection{Lamperti Laws}
%\label{subsubsec:Lamperti}
%The well-known  laws studied here arise from  the particular case $\{\sigma=\alpha, \theta=0\}$  of the family of distributions 
%$H_{\alpha,\theta; \sigma}(z)$ with density $h_{\alpha,\theta; \sigma}(t \vert z)$  introduced in this paper.
%Specifically,~(\ref{eq:hPollard3}) reduces to 
Explicitly
 \begin{alignat}{3}
 h_{\alpha,0; \alpha}(t \vert z)
 &=  \int_0^\infty  f_\alpha(t \vert z u) \, dP_{\alpha}(u)    \quad && [\textrm{by } (\ref{eq:ML_rvproduct_density})] \nonumber  \\
 &=  \frac{1}{\pi t}\,  {\rm Im} \sum_{k=0}^\infty  (-z e^{-i\pi\alpha} t^{-\alpha})^k  && [\textrm{by } (\ref{eq:ML_rvproduct_density_iinfseries});\,
                                       \Gamma(\alpha k+1)\mu_{\alpha,0;k} = k!] \nonumber  \\
 &= \frac{1}{\pi t}\,  {\rm Im} \frac{1}{1+z e^{-i\pi\alpha} t^{-\alpha}} \nonumber  \\
 &=  \frac{\sin\pi\alpha}{\pi}\, \frac{z\, t^{\alpha-1}} {z^2+2z \, t^\alpha \cos\pi\alpha+t^{2\alpha}} 
\label{eq:hPollard5} 
\end{alignat}
whose Laplace transform is  $E_\alpha(-z x^\alpha)$, as well-known.
The change of variable $t=u/(1-u)$ or $u=t/(1+t)$ with $dt=du/(1-u)^2$  transforms
 $h_{\alpha,0; \alpha}(t \vert z) dt$ to $ g_{\alpha,0; \alpha}(u \vert z) du$, where
 \begin{align}
 g_{\alpha,0; \alpha}(u \vert z)
 &=  \frac{\sin\pi\alpha}{\pi}\, \frac{z\, u^{\alpha-1}(1-u)^{\alpha-1}} {z^2(1-u)^{2\alpha}+2z \, u^\alpha (1-u)^\alpha \cos\pi\alpha+u^{2\alpha}} 
 \quad (0<u<1)
\label{eq:hPollard6} 
\end{align}
An expression analogous to  the definition $p_\alpha(t)=f_\alpha(1\vert t)/\alpha t$ of the Mittag-Leffler density  is 
 \begin{align}
\frac{1}{\alpha z} h_{\alpha,0; \alpha}(1 \vert z)
%= \frac{4}{\alpha z} g_{\alpha,0; \alpha}(1/2 \vert z)
 &=  \frac{\sin\pi\alpha}{\pi\alpha}\, \frac{1} {z^2+2z \cos\pi\alpha+1} 
\label{eq:hPollard7} 
\end{align}

The distributions  $H_{\alpha,0; \alpha}(z)$  are  the well-known generalised arcsine laws %with densities  above,
%as  well-known in the probabilistic literature.
 arising  in the study of  occupation times for  Markov processes
(Bertoin~{\it et al.}~\cite{Bertoin}, Feller~\cite[XIV.3]{Feller2}, James~\cite{James_Bernoulli, James_Lamperti}, 
James~{\it et al.}~\cite{JamesRoynetteYor},
Lamperti~\cite{Lamperti},  Pitman~\cite{Pitman_CSP, Pitman2018}).
The   Mittag-Leffler distribution  ${\rm ML}(\alpha)$   arises as 
the limiting distribution  for the occupation time of a single-state set (Darling and Kac~{\cite{DarlingKac}). 
The distributions are often studied from the perspective of a cascade of products and ratios of stable random variables.
Explicitly, define the   following random variables:  
\begin{enumerate}
\setlength{\itemsep}{0pt}
\item $S_{\alpha;z}$ with stable  density $f_\alpha(\cdot \vert z)$ 
\item $M_\alpha$ with  Mittag-Leffler distribution ${\rm ML}(\alpha)$ 
\item $X_{\alpha;z}$ with  density $h_{\alpha,0; \alpha}(\cdot\vert z)$  in the form shown in~(\ref{eq:hPollard5}) 
\item $U_{\alpha;z}$ with  density $g_{\alpha,0; \alpha}(\cdot \vert z)$  in the form shown in~(\ref{eq:hPollard6}) 
\item $Z_\alpha$ with  density $h_{\alpha,0; \alpha}(1 \vert z)/\alpha z$ in the form shown in~(\ref{eq:hPollard7})
\end{enumerate}
Then the following equalities in distribution are  known:
%$M_\alpha \overset{d}{=} S_{\alpha;1}^{-\alpha}$,  $T_{\alpha;z} \overset{d}{=}  S_{\alpha;z}/S_{\alpha;1}$, $Z_\alpha \overset{d}{=}  T_{\alpha;1}^\alpha$
\begin{alignat*}{2}
M_\alpha &\overset{d}{=} S_{\alpha;1}^{-\alpha}   & & (\textrm{Pitman~\cite{Pitman_CSP},  James~\cite{James_Lamperti}})  \\ 
%T_{\alpha;z} &\overset{d}{=}  S_{\alpha;z}/S_{\alpha;1}& &  (\textrm{James~\cite{James_Lamperti}, Pitman~\cite{Pitman2018}}) \\ 
X_{\alpha;z} &\overset{d}{=}  \frac{S_{\alpha;z}}{S_{\alpha;1}} & &  (\textrm{$z=1$: James~\cite{James_Bernoulli, James_Lamperti}, Pitman~\cite{Pitman2018}}) \\ 
%U_{\alpha;z} &\overset{d}{=}  \frac{T_{\alpha;z}}{1+T_{\alpha;z}} \quad & & (\textrm{Bertoin~{\it et al.}~\cite{Bertoin}, 
U_{\alpha;z} &\overset{d}{=}  \frac{X_{\alpha;z}}{1+X_{\alpha;z}} \quad & & 
(\textrm{James~{\it et al.}~\cite{JamesRoynetteYor}, Pitman~\cite{Pitman2018}}) \\
Z_\alpha &\overset{d}{=}  X_{\alpha;1}^\alpha  & & (\textrm{Chaumont and Yor~\cite[4.21.3(p116)]{ChaumontYor}})
\end{alignat*}
We may also write 
$X_{\alpha;z} \overset{d}{=}    S_{\alpha;z}M_\alpha^{1/\alpha} \overset{d}{=} T_{\alpha,0,\alpha,z}  \sim H_{\alpha,0; \alpha}(z)$.
%as already noted in~(\ref{eq:Lamperti_type_rv}). % for $\theta=0$.
%and  $Z_\alpha \overset{d}{=}  S_{\alpha;1}^{\alpha} M_\alpha$.
%We generalise this next. 

\begin{remark}
Bondesson~{\rm \cite[3.2.4(p38)]{Bondesson}} showed that   $S_{\alpha;1}\, G_{\gamma, 1}^{1/\alpha}$  has
%has  density $\ell_\alpha(x\vert \gamma)\equiv\ell_\alpha(x\vert \gamma,1)$ whose  
a Thorin measure $($in the language of infinitely divisible distributions known as generalised gamma convolutions$)$
with density
\begin{align*}
 \tau_\alpha(t\vert \gamma) %&= \frac{1}{\pi}\,{\rm Im} \, \frac{z}{z+  e^{-i\pi\alpha}t^\alpha}
 = \gamma\alpha \; \frac{\sin\pi\alpha}{\pi} \, \frac{t^{\alpha-1}}{1+2  t^\alpha\cos\pi\alpha+t^{2\alpha}}
% \label{eq:Thorin_density}
\end{align*}
%(although Bondesson did not use the term generalised positive  Linnik distribution).
%Actually, Bondesson implicitly chose $z=1$, but the generalisation to $z>0$  is only natural.
%\begin{comment}
Bondesson used the  theory of Pick functions,  amounting to  Stieltjes inversion of $($minus$)$ the 
logarithm derivative of the Linnik Laplace transform 
$\widetilde \ell_\alpha(s\vert \gamma) \equiv \widetilde \ell_\alpha(s\vert  \gamma,1,1)$ %$(z=\lambda=1)$
\begin{align*}
%-\frac{{\widetilde \ell}^{\, \prime}_\alpha(s\vert \gamma,z)}{\widetilde \ell_\alpha(s\vert \gamma,z)}
-\frac{d}{ds}\log \widetilde \ell_\alpha(s\vert \gamma)
&= \gamma  \alpha \, \frac{ s^{\alpha-1}}{1+  s^\alpha} \qquad (s>0)
%\label{eq:Linnik_densityLT_logderiv}
\end{align*}
\end{remark}

Finally, we turn to a natural generalisation of the Mittag-Leffler distributions  ${\rm ML}(\alpha,\theta)$. 
 
\section{Mittag-Leffler Generalisation}
\label{sec:generalisation}
Returning to Lemma~\ref{lem:gammaLinnik_conv}, let $\beta\to\beta+\theta$ and $\gamma\to\gamma+\theta/\alpha>0$.
With $\beta-\alpha\gamma\ge0$ remaining invariant, this generalises~(\ref{eq:gammaLinnik_conv_LT}) 
so  that for   $-\theta<\alpha\gamma\le\beta$
\begin{align}
\int_0^\infty  e^{-xt} \, p_{\alpha,\theta}(t\vert \beta,\gamma) \, dt   
&= \Gamma(\beta+\theta)E^{\gamma+\theta/\alpha}_{\alpha,\beta+\theta}(-x) \qquad (x\ge0)
\\
\textrm{where} \quad
p_{\alpha,\theta}(t\vert \beta,\gamma) 
&= \frac{\Gamma(\beta+\theta)}{\Gamma(\gamma+\theta/\alpha)} \, t^{\gamma+\theta/\alpha-1}
 \{\rho_{\beta-\alpha\gamma}\star f_\alpha(\cdot\vert  t)\}(1) 
\label{eq:rho_Linnik_convdensitygen}
\end{align}
$p_{\alpha,\theta}(\cdot \vert \beta,\gamma)$ is the density of a more general distribution % $P_{\alpha,\theta}(\beta,\gamma)$, 
we denote by 
$P_{\alpha,\theta\vert\beta,\gamma}\equiv {\rm ML}(\alpha,\theta \vert \beta,\gamma)$,
with $\beta=\gamma=1$ recovering the Mittag-Leffler distribution 
${\rm ML}(\alpha,\theta)\equiv{\rm ML}(\alpha,\theta \vert 1,1)$.
%$p_{\alpha,\theta}(t)\equiv p_{\alpha,\theta}(t\vert 1,1)$.
The moments of ${\rm ML}(\alpha,\theta \vert \beta,\gamma)$ take  a  more general form of the moments~$(\ref{eq:ML2parMoment})$ of 
${\rm ML}(\alpha,\theta)$
\begin{align}
\mu_{\alpha,\theta\vert\beta,\gamma;k} \equiv \int_0^\infty t^k \, P_{\alpha,\theta}(t\vert \beta,\gamma) 
&= \frac{\Gamma(\beta+\theta)\Gamma(\gamma+\theta/\alpha+k)}{\Gamma(\gamma+\theta/\alpha)\Gamma(\beta+\theta+k\alpha)}  
\quad  (-\theta<\alpha\gamma<\beta) 
 \label{eq:rho_Linnik_convdensitygen_moments} 
 \end{align}

When $\beta-\alpha\gamma=0$, the convolution term in~(\ref{eq:rho_Linnik_convdensitygen}) is
$\{\rho_0\star f_\alpha(\cdot\vert  t)\}(1)= f_\alpha(1\vert  t)$ since  $\rho_0(t)=\delta(t)$.
Hence, with $\beta=\alpha\gamma$ and  recalling that $f_\alpha(1\vert  t) = \alpha t \, p_\alpha(1\vert  t)$
\begin{alignat}{5}
p_{\alpha,\theta}\left(t \left. \right\vert \beta,\tfrac{\beta}{\alpha}\right) 
&= \frac{\Gamma(1+\beta+\theta)}{\Gamma\left(1+(\beta+\theta)/\alpha\right)} \, 
t^{(\beta+\theta)/\alpha} \, p_\alpha(1\vert  t)  &&\equiv p_{\alpha,\beta+\theta}(t) \quad &&(\beta+\theta>-\alpha) 
\label{eq:ML4par_identity}
%p_{\alpha,\theta}(t \left. \right\vert \alpha\gamma,\gamma) 
%&= \frac{\Gamma(1+\alpha\gamma+\theta)}{\Gamma\left(1+\gamma+\tfrac{\theta}{\alpha}\right)} \, 
%t^{\gamma+\theta/\alpha} \, p_\alpha(1\vert  t)  &&= p_{\alpha,\alpha\gamma+\theta}(t)  &&(\alpha\gamma+\theta>-\alpha)
\end{alignat}
{\it i.e.}\ $P_{\alpha,\theta\vert\beta,\beta/\alpha} = P_{\alpha,\beta+\theta}$ $(\beta+\theta>-\alpha)$.
Equivalently, $P_{\alpha,\theta \vert\alpha\gamma,\gamma} = P_{\alpha,\alpha\gamma+\theta}$  $(\alpha\gamma+\theta>-\alpha)$.
%When $\beta+\theta=1$ (or $\alpha\gamma+\theta=1$),  $P_{\alpha,\theta\vert 1-\theta,(1-\theta)/\alpha} = P_{\alpha,1}$ depends only on $\alpha$.
%\begin{align}
%&{\rm ML}(\alpha,0 \vert \beta,\beta/\alpha) \equiv {\rm ML}(\alpha,\beta) \quad (\beta>-\alpha)  \\
%&{\rm ML}(\alpha,0 \vert \alpha\gamma,\gamma) \equiv {\rm ML}(\alpha,\alpha\gamma) \quad (\gamma>-1)
%\end{align}

A correspondingly more general variant of Proposition~\ref{prop:ML_beta} takes the following form
\begin{proposition}
\label{prop:rho_Linnik_convdensitygen_beta}
The densities of ${\rm ML}(\alpha,\theta \left.\right\vert \beta,\gamma)$  and 
${\rm ML}(\alpha,\beta+\theta) \equiv {\rm ML}(\alpha,\theta \left.\right\vert \beta,\beta/\alpha)$ 
for $(-\theta<\alpha\gamma<\beta)$ are related by
\begin{align}
p_{\alpha,\theta}(t\vert \beta, \gamma)
 &= \int_0^1 p_{\alpha,\beta+\theta}\left(\tfrac{t}{u}\right) \, 
 {\rm beta} \left(u \left.\right\vert \tfrac{\theta}{\alpha}+\gamma, \tfrac{\beta}{\alpha}-\gamma\right) \frac{du}{u}
 \label{eq:rho_Linnik_convdensitygen_beta} \\
 &\equiv \int_0^1 p_{\alpha,\theta}\left(\tfrac{t}{u} \left. \right\vert \beta,\tfrac{\beta}{\alpha}\right) \, 
 {\rm beta} \left(u \left.\right\vert \tfrac{\theta}{\alpha}+\gamma, \tfrac{\beta}{\alpha}-\gamma\right) \frac{du}{u}
 \label{eq:rho_Linnik_convdensitygen_beta_1}
  \end{align}
Equivalently, define   the random variables 
\begin{align*}
M_{\alpha,\beta+\theta} &\sim {\rm ML}(\alpha,\beta+\theta) \\
B_{\theta/\alpha+\gamma,\beta/\alpha-\gamma} &\sim {\rm Beta}(\theta/\alpha+\gamma,\beta/\alpha-\gamma) \\
M_{\alpha,\theta\vert\beta,\gamma} &\sim {\rm ML}(\alpha,\theta \left.\right\vert \beta,\gamma)
\end{align*}
where the first two are independent. 
Then 
 \begin{align}
 M_{\alpha,\theta\vert\beta,\gamma} &\overset{d}{=} M_{\alpha,\beta+\theta} \, B_{\theta/\alpha+\gamma,\beta/\alpha-\gamma}
  \label{eq:ML4par_rvproduct} \\
  \textrm{or}\quad
 M_{\alpha,\theta\vert\beta,\gamma} &\overset{d}{=} 
 M_{\alpha,\theta\vert\beta,\beta/\alpha} \, B_{\theta/\alpha+\gamma,\beta/\alpha-\gamma}
  \label{eq:ML4par_rvproduct1}
 \end{align}
  \end{proposition}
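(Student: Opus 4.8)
The plan is to follow the template of the proof of Proposition~\ref{prop:ML_beta}, but starting from the closed form \eqref{eq:rho_Linnik_convdensitygen} of $p_{\alpha,\theta}(t\vert\beta,\gamma)$ instead of its $\beta=\gamma=1$ specialisation. The first step is to substitute $z\mapsto t$ and the evaluation point $\mapsto 1$ in the beta-density form \eqref{eq:power_stable_conv_betadensity} of Theorem~\ref{thm:rho_stable_convolution} with $\nu=\beta-\alpha\gamma$, using $(\beta-\alpha\gamma)/\alpha=\beta/\alpha-\gamma$, to write
\[
\{\rho_{\beta-\alpha\gamma}\star f_\alpha(\cdot\vert t)\}(1)
= \frac{t^{\beta/\alpha-\gamma}}{\Gamma(\beta/\alpha-\gamma)}\int_0^1 f_\alpha(1\vert t/u)\, u^{-(\beta/\alpha-\gamma)}(1-u)^{\beta/\alpha-\gamma-1}\,\frac{du}{u}.
\]
Plugging this into \eqref{eq:rho_Linnik_convdensitygen} yields an integral over $(0,1)$ whose integrand is $f_\alpha(1\vert t/u)$ times an explicit monomial in $t$ and $u$ and a constant built from Gamma functions.

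The second step is to re-express $f_\alpha(1\vert t/u)$ in terms of $p_{\alpha,\beta+\theta}(t/u)$. From \eqref{ML1parconv_density} one has $f_\alpha(1\vert s)=\alpha s\, p_\alpha(s)$, and from the tilting identity in \eqref{eq:ML2parconv_density} with $\theta$ replaced by $\beta+\theta$ one has $p_\alpha(s)=\frac{\Gamma(1+(\beta+\theta)/\alpha)}{\Gamma(1+\beta+\theta)}\,s^{-(\beta+\theta)/\alpha}\,p_{\alpha,\beta+\theta}(s)$; equivalently, this is the identity \eqref{eq:ML4par_identity}. Substituting $s=t/u$ and collecting exponents, the powers of $t$ cancel exactly (the net exponent is $(\beta+\theta)/\alpha-1+1-(\beta+\theta)/\alpha=0$) while the powers of $u$ accumulate to $u^{\theta/\alpha+\gamma-2}$, i.e.\ to $u^{-1}$ times the monomial $u^{\theta/\alpha+\gamma-1}(1-u)^{\beta/\alpha-\gamma-1}$ occurring in ${\rm beta}(u\vert\theta/\alpha+\gamma,\beta/\alpha-\gamma)$. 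It then only remains to check that the collected constant equals $\Gamma((\beta+\theta)/\alpha)/[\Gamma(\theta/\alpha+\gamma)\Gamma(\beta/\alpha-\gamma)]$, i.e.\ that the overall prefactor reduces to $1$; this is a one-line computation using $\Gamma(1+x)=x\Gamma(x)$ for $x=\beta+\theta$ and $x=(\beta+\theta)/\alpha$. This establishes \eqref{eq:rho_Linnik_convdensitygen_beta}, and \eqref{eq:rho_Linnik_convdensitygen_beta_1} follows immediately from $p_{\alpha,\beta+\theta}(\cdot)\equiv p_{\alpha,\theta}(\cdot\vert\beta,\beta/\alpha)$ of \eqref{eq:ML4par_identity}.

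For the distributional statements \eqref{eq:ML4par_rvproduct} and \eqref{eq:ML4par_rvproduct1}, I would invoke the product-density formula \eqref{eq:productdensity}: any integral of the shape $\int_0^1 g(t/u)\,{\rm beta}(u\vert a,b)\,\frac{du}{u}$ is the density of $UB$ for independent $U$ (with density $g$) and $B\sim{\rm Beta}(a,b)$. Applying this with $a=\theta/\alpha+\gamma$, $b=\beta/\alpha-\gamma$ and $g$ the density of $M_{\alpha,\beta+\theta}$ (respectively of $M_{\alpha,\theta\vert\beta,\beta/\alpha}$, which is the same law by \eqref{eq:ML4par_identity}) delivers both product representations. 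A bookkeeping point to flag along the way: the hypothesis $-\theta<\alpha\gamma<\beta$ is precisely what makes both beta-parameters $\theta/\alpha+\gamma$ and $\beta/\alpha-\gamma$ strictly positive, so the beta density is genuine and Theorem~\ref{thm:rho_stable_convolution} applies; the boundary $\beta=\alpha\gamma$, which collapses $\rho_0$ to $\delta$ and the beta law to a point mass, is correctly excluded.

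I do not expect a genuine conceptual obstacle --- the argument is structurally identical to that of Proposition~\ref{prop:ML_beta}, with the single ``shift'' ${\rm beta}(\theta/\alpha+1,1/\alpha-1)$ replaced by ${\rm beta}(\theta/\alpha+\gamma,\beta/\alpha-\gamma)$ and $\theta+1$ replaced throughout by $\beta+\theta$. The main work, and the only place to be careful, is the exponent-and-Gamma-function bookkeeping in the second step: ensuring that the monomial in $u$ together with the multiplicative constant assemble \emph{exactly} into the claimed beta density rather than merely something proportional to it.
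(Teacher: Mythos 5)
Your proposal is correct and follows essentially the same route as the paper's own proof: apply the beta-density form of Theorem~\ref{thm:rho_stable_convolution} with $\nu=\beta-\alpha\gamma$ to $\{\rho_{\beta-\alpha\gamma}\star f_\alpha(\cdot\vert t)\}(1)$, rewrite $f_\alpha(1\vert t/u)$ via $p_\alpha$ and the polynomial tilt as $p_{\alpha,\beta+\theta}(t/u)$, check that the Gamma-function prefactor collapses to the beta normalisation, and then read off the product representations from \eqref{eq:productdensity}. Your exponent and constant bookkeeping (net $t$-exponent zero, $u$-monomial assembling into ${\rm beta}(u\vert\theta/\alpha+\gamma,\beta/\alpha-\gamma)\,du/u$, overall constant equal to $1$) matches the paper's computation exactly.
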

 This generates  the  time-homogeneous Mittag-Leffler Markov chain of Section~\ref{sec:MLMC} for $\beta=\gamma=1$.

 \begin{proof}[Proof of Proposition~$\ref{prop:rho_Linnik_convdensitygen_beta}$]
\label{proof:rho_Linnik_convdensitygen_beta}
By  Theorem~\ref{thm:rho_stable_convolution}, 
$\{\rho_{\beta-\alpha\gamma}\star f_\alpha(\cdot\vert  t)\}(1)$ may be represented as 
\begin{align*}
&\{\rho_{\beta-\alpha\gamma} \star f_\alpha(\cdot\vert t)\}(1)
%&=   t^{\beta/\alpha-\gamma} \int_0^\infty f_\alpha(1\vert t u)\, \{\rho_{\beta/\alpha-\gamma} \star \delta_1\}(u) du \\
=  \frac{t^{\beta/\alpha-\gamma}}{\Gamma(\tfrac{\beta}{\alpha}-\gamma)}
  \int_0^1 f_\alpha(1\vert t/u) \, u^{\gamma-\beta/\alpha}  (1-u)^{\beta/\alpha-\gamma-1}  \, \frac{du}{u} \\
%\end{align*}
%Hence
%\begin{align*}
\implies \;
 &p_{\alpha,\theta}(t \vert \beta,\gamma) 
= \frac{\Gamma(\beta+\theta)}{\Gamma(\gamma+\theta/\alpha)} \, t^{\gamma+\theta/\alpha-1} \,
\{\rho_{\beta-\alpha\gamma} \star f_\alpha(\cdot\vert t)\}(1)  \\
&=  \frac{\Gamma(\beta+\theta)}{\Gamma(\gamma+\theta/\alpha)\Gamma(\tfrac{\beta}{\alpha}-\gamma)}
  \int_0^1 f_\alpha(1\vert \tfrac{t}{u}) \, (\tfrac{t}{u})^{(\beta+\theta)/\alpha-1} 
    u^{\theta/\alpha+\gamma-1} (1-u)^{\beta/\alpha-\gamma-1}  \, \frac{du}{u}  \\
&= \frac{\Gamma(\beta+\theta)}{\Gamma(\gamma+\theta/\alpha)} \frac{1}{\Gamma(\beta/\alpha-\gamma)}
    \frac{\alpha\Gamma(1+(\beta+\theta)/\alpha)}{\Gamma(1+\beta+\theta)} 
    \frac{\Gamma(\gamma+\theta/\alpha)\Gamma(\beta/\alpha-\gamma)}{\Gamma((\beta+\theta)/\alpha)} \\ 
&\qquad  \times  \int_0^1 p_{\alpha,\beta+\theta}\left(\tfrac{t}{u}\right) \, 
 {\rm beta} \left(u \left.\right\vert \tfrac{\theta}{\alpha}+\gamma, \tfrac{\beta}{\alpha}-\gamma\right) \frac{du}{u} \\
&= \int_0^1 p_{\alpha,\beta+\theta}\left(\tfrac{t}{u}\right) \, 
 {\rm beta} \left(u \left.\right\vert \tfrac{\theta}{\alpha}+\gamma, \tfrac{\beta}{\alpha}-\gamma\right) \frac{du}{u}
 \end{align*}
 with the  product of gamma functions evaluating to 1, thereby proving~(\ref{eq:rho_Linnik_convdensitygen_beta}). 
 The identity~(\ref{eq:ML4par_identity}) gives~(\ref{eq:rho_Linnik_convdensitygen_beta_1}).
 By~(\ref{eq:productdensity}), (\ref{eq:rho_Linnik_convdensitygen_beta}) is the distribution of 
 $M_{\alpha,\beta+\theta} \, B_{\theta/\alpha+\gamma,\beta/\alpha-\gamma}$, 
 hence~(\ref{eq:ML4par_rvproduct}) and~(\ref{eq:ML4par_rvproduct1}).
\end{proof}
\begin{remark}
\label{rem:ML4par_rvproduct}
For $\gamma=0$, $(\ref{eq:ML4par_rvproduct})$  reduces to~Banderier~{\it  et al.}~{\rm \cite[Lemma~3.14]{Banderier}}.
For  $\theta=0$, $(\ref{eq:ML4par_rvproduct1})$  
%$M_{\alpha,0 \vert\beta,\gamma} \overset{d}{=} M_{\alpha,0 \vert\beta,\beta/\alpha} \, B_{\gamma,\beta/\alpha-\gamma}$
reduces to Bertoin and Yor~{\rm \cite[Lemma~6(ii)]{BertoinYor}}, 
where $M_{\alpha,0 \vert\beta,\gamma} \equiv J^{(\alpha)}_{\beta,\gamma}$ in their notation.
Also see Ho~{\it  et al.}~{\rm \cite[Proposition~2.1, Remark~2.3]{HoJamesLau}}
and M\"{o}hle~{\rm \cite[Corollary~3]{Mohle}}.
\end{remark}

\begin{proposition}
\label{prop:ML4par_rvproduct}
Consider two  independent  positive random variables
\begin{itemize}
\setlength{\itemsep}{0pt}
\item $S_{\sigma;z} \sim F_\sigma(z>0)$ $(0<\sigma<1)$,  where $F_\sigma(z)$ is  the  
$\sigma$-stable distribution, density $f_\sigma(\cdot\vert z)$ 
\item $M_{\alpha,\theta \vert\beta,\gamma} \sim P_{\alpha,\theta\vert\beta,\gamma}\equiv {\rm ML}(\alpha,\theta \vert \beta,\gamma)$ 
$(0<\alpha<1,  -\theta<\alpha\gamma\le\beta)$
%the  distribution with density~$(\ref{eq:rho_Linnik_convdensitygen})$
\end{itemize}
Then the following holds
\begin{enumerate}
\setlength{\itemsep}{0pt}
\item The product $S_{\sigma;z} \, M_{\alpha,\theta\vert\beta,\gamma}^{1/\sigma}$  has a distribution 
$H_{\alpha,\theta; \sigma}(z,\beta,\gamma)$ with mixture density:
\begin{align}
h_{\alpha,\theta; \sigma}(t \vert z , \beta,\gamma)
&=  \int_0^\infty  f_\sigma(t \vert z u) \, dP_{\alpha,\theta}(u\vert\beta,\gamma)
\label{eq:ML4par_rvproduct_density}
\end{align}
\item The  Laplace transform %$\widetilde h_{\alpha,\theta; \sigma}(x \vert z,\beta,\gamma)$   
of~$(\ref{eq:ML4par_rvproduct_density})$ is 
\begin{align}
 \mathbb{E}\left[e^{-xS_{\sigma;z} M_{\alpha,\theta \vert\beta,\gamma}^{1/\sigma}}\right]  
 &=  \mathbb{E}\left[e^{-zx^\sigma M_{\alpha,\theta \vert\beta,\gamma}}\right] 
 = \Gamma(\beta+\theta)E^{\gamma+\theta/\alpha}_{\alpha,\beta+\theta}(-z x^\sigma) %\qquad  &&(\theta>-\alpha)
\label{eq:ML4par_rvproduct_LT}
\end{align}
\item $h_{\alpha,\theta; \sigma}(t \vert z , \beta,\gamma)$  may  be expressed as the infinite series
\begin{align}
h_{\alpha,\theta; \sigma}(t \vert z , \beta,\gamma)
&= -  \frac{1}{\pi}\,   \sum_{k=1}^\infty  \frac{(-z)^k}{k!} \sin(\pi\sigma k) \; \frac{\Gamma(\sigma k+1)}{t^{\sigma k+1}}  \;
 \mu_{\alpha,\theta\vert\beta,\gamma;k} 
\label{eq:ML4par_rvproduct_density_infseries}
\end{align}
where $\mu_{\alpha,\theta\vert\beta,\gamma;k}$ is the $k^{\rm th}$ moment~$(\ref{eq:rho_Linnik_convdensitygen_moments})$ 
of the distribution ${\rm ML}(\alpha,\theta \vert \beta,\gamma)$.
\end{enumerate}
  %which may also be written as 
%For $\theta>0$, $\Gamma(1+\theta)E^{1+\theta/\alpha}_{\alpha,1+\theta}(\cdot)\to\Gamma(\theta)E^{\theta/\alpha}_{\alpha,\theta}(\cdot)$.
\end{proposition}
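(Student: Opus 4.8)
The plan is to follow the proof of Proposition~\ref{prop:ML_rvproduct} essentially verbatim, since Proposition~\ref{prop:ML4par_rvproduct} is the same statement with the mixing law ${\rm ML}(\alpha,\theta)$ upgraded to its generalisation ${\rm ML}(\alpha,\theta\vert\beta,\gamma)$. First I would apply Theorem~\ref{thm:gen_rvproduct}, with the stable index there taken to be $\sigma$ and with $U\equiv M_{\alpha,\theta\vert\beta,\gamma}\sim F\equiv P_{\alpha,\theta\vert\beta,\gamma}$. The theorem immediately delivers that the product $S_{\sigma;z}\,M_{\alpha,\theta\vert\beta,\gamma}^{1/\sigma}$ has mixture density $\int_0^\infty f_\sigma(t\vert zu)\,dP_{\alpha,\theta}(u\vert\beta,\gamma)$, which is~(\ref{eq:ML4par_rvproduct_density}), and that its Laplace transform equals $\mathbb{E}\!\left[e^{-zx^\sigma M_{\alpha,\theta\vert\beta,\gamma}}\right]$ for $x\ge0$. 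This settles part~1 and gives the left-hand equality in part~2.

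To finish part~2 I would evaluate $\mathbb{E}\!\left[e^{-zx^\sigma M_{\alpha,\theta\vert\beta,\gamma}}\right]$ using the Laplace transform of ${\rm ML}(\alpha,\theta\vert\beta,\gamma)$ recorded in Section~\ref{sec:generalisation}, namely $\int_0^\infty e^{-yt}\,p_{\alpha,\theta}(t\vert\beta,\gamma)\,dt=\Gamma(\beta+\theta)E^{\gamma+\theta/\alpha}_{\alpha,\beta+\theta}(-y)$ for $y\ge0$. Substituting $y=zx^\sigma\ge0$ yields~(\ref{eq:ML4par_rvproduct_LT}) at once. As a consistency check, setting $x=0$ returns $\Gamma(\beta+\theta)E^{\gamma+\theta/\alpha}_{\alpha,\beta+\theta}(0)=1$, confirming that the mixture density integrates to one.

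For part~3 I would invoke Corollary~\ref{cor:hPollardinfsum}, whose only hypothesis is that $F$ has finite moments of every order. Those moments are available in closed form from~(\ref{eq:rho_Linnik_convdensitygen_moments}): $\mu_{\alpha,\theta\vert\beta,\gamma;k}=\Gamma(\beta+\theta)\Gamma(\gamma+\theta/\alpha+k)/\bigl[\Gamma(\gamma+\theta/\alpha)\Gamma(\beta+\theta+k\alpha)\bigr]$. Since the constraints $-\theta<\alpha\gamma\le\beta$ force $\gamma+\theta/\alpha>0$ and $\beta+\theta>0$, every Gamma argument appearing here is positive for each $k\ge0$, so all moments are finite. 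Corollary~\ref{cor:hPollardinfsum}, applied with stable index $\sigma$, then reproduces the general series~(\ref{eq:hPollardinfsum}) with $\mu_k$ now the moments $\mu_{\alpha,\theta\vert\beta,\gamma;k}$, which is~(\ref{eq:ML4par_rvproduct_density_infseries}).

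There is no genuine obstacle here; the argument is a bookkeeping assembly of Theorem~\ref{thm:gen_rvproduct}, the Laplace transform from Section~\ref{sec:generalisation}, and Corollary~\ref{cor:hPollardinfsum}. The single point worth a line of care is that ${\rm ML}(\alpha,\theta\vert\beta,\gamma)$ is a bona fide probability law with all moments finite across the whole region $-\theta<\alpha\gamma\le\beta$, including the boundary $\alpha\gamma=\beta$; there $\rho_{\beta-\alpha\gamma}=\rho_0=\delta$ and, by~(\ref{eq:ML4par_identity}), the law collapses to ${\rm ML}(\alpha,\beta+\theta)$ with $\beta+\theta>0>-\alpha$, so the moment condition continues to hold and the proof goes through unchanged.
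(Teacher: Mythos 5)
Your proposal is correct and follows essentially the same route as the paper: the paper's proof simply takes $U\equiv M_{\alpha,\theta\vert\beta,\gamma}$ in Theorem~\ref{thm:gen_rvproduct} for parts 1--2 and invokes Corollary~\ref{cor:hPollardinfsum} with the moments~(\ref{eq:rho_Linnik_convdensitygen_moments}) for part 3. Your extra remarks (evaluating the Laplace transform at $y=zx^\sigma$ via the transform recorded in Section~\ref{sec:generalisation}, and checking moment finiteness including the boundary $\alpha\gamma=\beta$) are sound elaborations of details the paper leaves implicit.
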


\begin{proof} [Proof of Proposition~$\ref{prop:ML4par_rvproduct}$]
\label{proof:ML4par_rvproduct}
$U \equiv M_{\alpha,\theta \vert\beta,\gamma} \sim F \equiv {\rm ML}(\alpha,\theta \vert \beta,\gamma)$  in Theorem~\ref{thm:gen_rvproduct}
to prove~(\ref{eq:ML4par_rvproduct_density}) and~(\ref{eq:ML4par_rvproduct_LT}).
Since ${\rm ML}(\alpha,\theta \vert \beta,\gamma)$ has moments $\{\mu_{\alpha,\theta\vert\beta,\gamma;k}\}$ of all order,  
 Corollary~\ref{cor:hPollardinfsum} gives~(\ref{eq:ML4par_rvproduct_density_infseries}).
 \end{proof}

\begin{remark}
\label{rem:ML4par_rvproduct1}
Proposition~$\ref{prop:ML4par_rvproduct}$ subsumes  Proposition~$\ref{prop:ML_rvproduct}$ as a special case,
but the presentation order  in this paper better captures the  thought process  and arguably makes for easier understanding.
\end{remark}

\subsection{More on Lamperti-type Laws}
\label{sec:Lamperti-type_gen}

In Section~\ref{sec:Lamperti-type}, we discussed Lamperti-type densities with $\sigma=\alpha$; {\it i.e.}\
$h_{\alpha,\theta; \alpha}(t \vert z)$ induced by the  random variable product $S_{\alpha;z} \, M_{\alpha,\theta}^{1/\alpha}$.
We conclude by exploring   the more general case of densities   $h_{\alpha,\theta; \alpha}(t \vert z , \beta,\gamma)$ induced  by 
$S_{\alpha;z} \, M_{\alpha,\theta\vert\beta,\gamma}^{1/\alpha}$,
{\it i.e.}\ the $\sigma=\alpha$ case of~(\ref{eq:ML4par_rvproduct_density})    %Proposition~\ref{prop:ML4par_rvproduct}.
with Laplace transform $\Gamma(\beta+\theta)E^{\gamma+\theta/\alpha}_{\alpha,\beta+\theta}(-z x^\alpha)$. 
We focus on $\beta+\theta=1$ as a particular instance that simplifies the infinite sum representation~(\ref{eq:ML4par_rvproduct_density_infseries}).

\begin{proposition}
\label{prop:Lamperti-type_gen}
In Proposition~$\ref{prop:ML4par_rvproduct}$, let  $\sigma=\alpha$ and  $\beta+\theta=1 \implies -\theta<\alpha\gamma \le1-\theta$
or $0< \alpha\gamma+\theta\le 1$. 
Then the product $S_{\alpha;z} \, M_{\alpha,\theta\vert 1-\theta,\gamma}^{1/\alpha}$  has a distribution with density
\begin{align}
h_{\alpha,\theta; \alpha}(t \vert z , 1-\theta,\gamma)
&=  \int_0^\infty  f_\alpha(t \vert z u) \, dP_{\alpha,\theta}(u\vert 1-\theta,\gamma)
\label{eq:Lamperti-type_gen_density} \\
&= \frac{1}{\pi} \, {\rm Im} \,    \frac{t^{\alpha\gamma+\theta-1}}{(z e^{-i\pi\alpha}+t^{\alpha})^{\gamma+\theta/\alpha}}
\label{eq:Lamperti-type_gen_density_infseries}
\end{align}
with Laplace transform $E^{\gamma+\theta/\alpha}_{\alpha,1}(-z x^\alpha)$.
For $\alpha\gamma+\theta=\alpha$,~$(\ref{eq:Lamperti-type_gen_density_infseries})$  reduces to the density~$(\ref{eq:hPollard5})$ of  the Lamperti laws. 
% described in  Section~$\ref{sec:Lamperti-type}$.
%{\it i.e.}\ $h_{\alpha,0; \alpha}(t \vert z)\equiv h_{\alpha,0; \alpha}(t \vert z , 1,1)=h_{\alpha,\theta; \alpha}(t \vert z , 1-\theta,1-\theta/\alpha)$.
% The same result holds for $(\gamma=0,\theta=\alpha)$, so that   $h_{\alpha,0; \alpha}(t \vert z)\equiv h_{\alpha,\alpha; \alpha}(t \vert z , 1-\alpha,0)$.
\end{proposition}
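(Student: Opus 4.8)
The plan is to instantiate Proposition~\ref{prop:ML4par_rvproduct} with $\sigma=\alpha$ and $\beta=1-\theta$, then evaluate the resulting infinite series in closed form. First I would observe that $(\ref{eq:Lamperti-type_gen_density})$ and the Laplace transform claim are immediate: they are just $(\ref{eq:ML4par_rvproduct_density})$ and $(\ref{eq:ML4par_rvproduct_LT})$ with the stated substitutions, using $\beta+\theta=1$ so that $\Gamma(\beta+\theta)=\Gamma(1)=1$ and the Mittag-Leffler function becomes $E^{\gamma+\theta/\alpha}_{\alpha,1}(-zx^\alpha)$. The real content is the series evaluation $(\ref{eq:Lamperti-type_gen_density_infseries})$.

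For that, start from the series representation $(\ref{eq:ML4par_rvproduct_density_infseries})$ with $\sigma=\alpha$, which reads
\begin{align*}
h_{\alpha,\theta;\alpha}(t\vert z,1-\theta,\gamma)
&= -\frac{1}{\pi}\sum_{k=1}^\infty \frac{(-z)^k}{k!}\sin(\pi\alpha k)\,\frac{\Gamma(\alpha k+1)}{t^{\alpha k+1}}\,\mu_{\alpha,\theta\vert 1-\theta,\gamma;k}.
\end{align*}
Using $(\ref{eq:rho_Linnik_convdensitygen_moments})$ with $\beta=1-\theta$, the moment is $\mu_{\alpha,\theta\vert 1-\theta,\gamma;k}=\Gamma(1)\,\Gamma(\gamma+\theta/\alpha+k)/[\Gamma(\gamma+\theta/\alpha)\,\Gamma(1+k\alpha)]$, and crucially the $\Gamma(\alpha k+1)$ in the numerator cancels the $\Gamma(1+k\alpha)$ in the denominator. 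Writing $\sin(\pi\alpha k)=\mathrm{Im}\,e^{i\pi\alpha k}$ (equivalently $-\mathrm{Im}\,e^{-i\pi\alpha k}$, matching the sign) and using the Pochhammer identity $\Gamma(\gamma+\theta/\alpha+k)/\Gamma(\gamma+\theta/\alpha)=(\gamma+\theta/\alpha)_k$, the sum collapses to $\frac{1}{\pi t}\,\mathrm{Im}\sum_{k\ge1}\binom{-(\gamma+\theta/\alpha)}{k}(z e^{-i\pi\alpha}t^{-\alpha})^k$ up to recognizing the generalized binomial coefficient; adding the $k=0$ term (which is real and contributes nothing to the imaginary part) and applying the binomial series $\sum_{k\ge0}\binom{-a}{k}w^k=(1+w)^{-a}$ gives
\begin{align*}
h_{\alpha,\theta;\alpha}(t\vert z,1-\theta,\gamma)
&= \frac{1}{\pi t}\,\mathrm{Im}\,(1+z e^{-i\pi\alpha}t^{-\alpha})^{-(\gamma+\theta/\alpha)},
\end{align*}
which rearranges to $(\ref{eq:Lamperti-type_gen_density_infseries})$ after pulling $t^{-\alpha}$ out and relabelling the exponent, noting $\frac1t\cdot t^{\alpha(\gamma+\theta/\alpha)}=t^{\alpha\gamma+\theta-1}$. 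Finally, the reduction to $(\ref{eq:hPollard5})$ when $\alpha\gamma+\theta=\alpha$ (i.e.\ $\gamma+\theta/\alpha=1$) is a direct substitution, since then the exponent is $1$ and the expression is exactly the $\theta=0,\sigma=\alpha$ density computed in $(\ref{eq:hPollard5})$.

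The main obstacle is bookkeeping the convergence and the branch of the complex power: one must check $|z e^{-i\pi\alpha}t^{-\alpha}|<1$ is not actually required because the series and the function agree by analytic continuation on $(0,\infty)$, and one must fix the principal branch of $(1+w)^{-a}$ so that $\mathrm{Im}$ is taken consistently with the sign of $\sin(\pi\alpha k)$. I would handle this by noting that both sides are the (unique, by Laplace-transform inversion) density with transform $E^{\gamma+\theta/\alpha}_{\alpha,1}(-zx^\alpha)$, so the closed form is justified wherever the manipulation is formally valid and then extends by uniqueness; the series step is essentially Pollard's argument $(\ref{eq:Pollard})$--$(\ref{eq:Pollardinfsum2})$ carried through with an extra Pochhammer factor. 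A secondary point is confirming the exponent arithmetic $\Gamma(\alpha k+1)/\Gamma(1+k\alpha)=1$ term-by-term and that no stray $\Gamma(1-\theta)$ survives because $\beta+\theta=1$; both are routine.
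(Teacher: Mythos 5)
Your proposal is correct and follows essentially the same route as the paper's own proof: instantiate the series~(\ref{eq:ML4par_rvproduct_density_infseries}) with $\sigma=\alpha$, use $\beta+\theta=1$ so that $\Gamma(\alpha k+1)$ cancels $\Gamma(\alpha k+\beta+\theta)$, and sum the resulting Pochhammer/negative-binomial series to obtain $\tfrac{1}{\pi t}\,{\rm Im}\,(1+z e^{-i\pi\alpha}t^{-\alpha})^{-(\gamma+\theta/\alpha)}$, which rearranges to~(\ref{eq:Lamperti-type_gen_density_infseries}); your added remarks on convergence, branch choice, and uniqueness of Laplace transforms are extra care beyond what the paper records, not a deviation in method.
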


\begin{proof} [Proof of Proposition~$\ref{prop:Lamperti-type_gen}$]
For  $\sigma=\alpha$ and $\beta+\theta=1$, 
the term $\Gamma(\sigma k+1) \, \mu_{\alpha,\theta\vert\beta,\gamma;k}$ in~(\ref{eq:ML4par_rvproduct_density_infseries}) is
\begin{align*}
\Gamma(\alpha k+1)\,  \mu_{\alpha,\theta\vert\beta,\gamma;k} &= \Gamma(\alpha k+1)
 \frac{\Gamma(\beta+\theta)\Gamma(\gamma+\theta/\alpha+k)}{\Gamma(\gamma+\theta/\alpha)\Gamma(\alpha k+\beta+\theta)}  
 = \frac{\Gamma(\gamma+\theta/\alpha+k)}{\Gamma(\gamma+\theta/\alpha)}
\end{align*}
Hence~(\ref{eq:ML4par_rvproduct_density_infseries}) becomes
\begin{align*}
h_{\alpha,\theta; \alpha}(t \vert z , 1-\theta,\gamma)
&=   \frac{1}{\pi t}\,  {\rm Im} \sum_{k=0}^\infty  \frac{\Gamma(\gamma+\theta/\alpha+k)}{k! \, \Gamma(\gamma+\theta/\alpha)}
(-z e^{-i\pi\alpha} t^{-\alpha})^k \\
&=  \frac{1}{\pi t}\,  {\rm Im} \, \frac{1}{(1+z e^{-i\pi\alpha}t^{-\alpha})^{\gamma+\theta/\alpha}} \\
&=   \frac{1}{\pi}\,  {\rm Im} \,  \frac{t^{\alpha\gamma+\theta-1}}{(z e^{-i\pi\alpha}+t^{\alpha})^{\gamma+\theta/\alpha}}
 \end{align*}
 thereby proving~(\ref{eq:Lamperti-type_gen_density_infseries}).
 The rest of Proposition~$\ref{prop:Lamperti-type_gen}$ follows straightforwardly.
\end{proof}

\section{Discussion and Conclusion}
\label{sec:discussion}

We have  solely used convolution and mixing induced  by products of random variables to derive a rich family of distributions.
A common feature  is the presence  of the positive stable distribution in  both constructs.
The highlight of the paper  is the derivation of  a novel and general family of distributions % $H_{\alpha,\theta; \sigma}(z)$ 
%or ${\rm ML}(\alpha,\theta;\sigma  \vert z)$,
with explicit  integral and series representations for their  densities.  %$h_{\alpha,\theta; \alpha}(\cdot \vert z)$.  
They include  the familiar Mittag-Leffler  and Lamperti-type laws as special cases.
 We  have had  no cause to mention the Dirichlet and Poisson-Dirichlet distributions, or random selection metaphors such as
  P{\' o}lya urns or restaurant seating.
 
The  family of distributions  has Laplace transforms
$\Gamma(\beta+\theta)\, E^{\gamma+\theta/\alpha}_{\alpha,\beta+\theta}(-z x^\sigma)$.
%suggests the naming of the whole  as  Mittag-Leffler distributions of various levels of generality.
%The familiar Mittag-Leffler  distribution is the case $\{\beta=\gamma=\sigma=1\}$, while $\{\beta=\gamma=1,  \sigma=\alpha\}$ is the  Lamperti-type case.
This common form of Laplace transforms suggests referring to the whole family of  distributions as Mittag-Leffler  distributions of various levels of generality.
%However, their  properties can  otherwise be  explored without reference to their Laplace transforms.
We have constructed all distributions directly through mixing and convolution without invoking any complex analytic inversion.
Such inversion, where it arises implicitly, is  contained in the  derivation by Pollard~\cite{Pollard} of  representation~(\ref{eq:Pollard}) 
of the stable distribution.

\appendix
\numberwithin{equation}{section}

\section{Mittag-Leffler Functions}
\label{sec:MLfunction}

Gorenflo {\it et al.}~\cite{gorenflo2014mittag} is a comprehensive treatise on Mittag-Leffler functions and their applications.
The 3-parameter Mittag-Leffler function, also known as the Prabhakar function,
has an infinite series representation for $z\in \mathbb{C}$
\begin{align}
E^\gamma_{\alpha,\beta}(z) &= \frac{1}{\Gamma(\gamma)} 
   \sum_{k=0}^\infty \frac{\Gamma(\gamma+k)}{k!\,\Gamma(\alpha k+\beta)}\, z^k  \qquad {\rm Re}(\alpha)>0, \, {\rm Re}(\beta)>0, \, \gamma > 0
\label{eq:ML3parseries}
\end{align}
The 1-parameter Mittag-Leffler function $E_\alpha(z)$ is the case $\beta=\gamma=1$.
The termwise Laplace transform of $x^{\beta-1}E^\gamma_{\alpha,\beta}(-\lambda x^\alpha)$  $(\lambda>0)$ sums to
%$E_\alpha(-\lambda x^\alpha)$ $(\lambda>0)$ is 
\begin{align}
\int_0^\infty e^{-sx} \, x^{\beta-1}E^\gamma_{\alpha,\beta}(-\lambda x^\alpha) \,dx  
  &= \frac{s^{\alpha\gamma-\beta}}{(\lambda+s^\alpha)^\gamma} 
\label{eq:ML3parseriesLT}
\end{align}
%Other than this case, Laplace  transforms involving  $E^\gamma_{\alpha,\beta}(x)$  do not reduce  to a simple form.
%This is the only property of  direct interest this paper, aside from the observation from (\ref{eq:ML3parseries}) that  
%$E^\gamma_{\alpha,\beta}(0)=1/\Gamma(\beta)$.
%As discussed in Gorenflo {\it et al.}~\cite{gorenflo2014mittag} and numerous other references such as the survey by Giusti {\it et al.}~\cite{Giusti}, 
There are  many other attributes of Mittag-Leffler functions, often related to
fractional  calculus and its applications, but these  are not  of direct interest here.

What is key  in this paper is that we  often interpret  $E_\alpha(-y x^\alpha)$ %What we have in mind here and henceforth is 
as a function of two variables $x$ and $y$, not merely as  a function of $x$ for some fixed parameter $y$.
Thus, we    typically work with $E_\alpha(-y x^\alpha)$ as a  function of $x$ given $y$ or $y$ given $x$.
%respectively denoted by $x\vert y$ or $y \vert x$ in the associated probabilistic constructions.

%We note that $E^{\gamma+\theta/\alpha}_{\alpha,\beta+\theta}(-z x^\sigma)$ is  a composition of  
%$E^{\gamma+\theta/\alpha}_{\alpha,\beta+\theta}(-x)$ with   $zx^\sigma$ ($0<\sigma<1, \, z>0$), which  is a  completely monotone function.
%This suggests further generalisation through composition with other  completely monotone functions -- a topic we reserve for separate study.
\bibliography{./IntegralRepresentation}{}
\bibliographystyle{plain} 
\end{document}